\documentclass[11pt]{article}

\usepackage[margin=1in]{geometry}
\usepackage{amsmath,amssymb,amsthm,mathtools}
\usepackage{enumitem}
\usepackage{microtype}
\usepackage[colorlinks=true,linkcolor=blue,citecolor=blue,urlcolor=blue]{hyperref}
\hypersetup{
  pdftitle={The Expiring Coupon Collector: Sliding-Window Surjection Flux and Rare-Entry Laws},
  pdfauthor={Christopher D. Long, Headlamp Software}
}
\newtheorem{theorem}{Theorem}[section]
\newtheorem{lemma}[theorem]{Lemma}
\newtheorem{proposition}[theorem]{Proposition}
\newtheorem{corollary}[theorem]{Corollary}
\theoremstyle{definition}

\newtheorem{remark}[theorem]{Remark}

\newcommand{\Pbb}{\mathbb P}
\newcommand{\Ebb}{\mathbb E}

\newcommand{\Rbb}{\mathbb R}
\newcommand{\1}{\mathbf 1}

\newcommand{\Exp}{\mathrm{Exp}}

\newcommand{\Bin}{\mathrm{Bin}}
\newcommand{\Var}{\mathrm{Var}}

\newcommand{\floor}[1]{\left\lfloor #1\right\rfloor}

\title{The Expiring Coupon Collector\\
Sliding-Window Surjection Flux and Rare-Entry Laws}
\author{Christopher D. Long\\
Headlamp Software\\
\texttt{galizur@gmail.com}}
\date{}

\begin{document}
\maketitle

\begin{abstract}
We study the coupon collector with deterministic expiration: one coupon is drawn at each time, and each coupon remains active for exactly $M$ draws.  Completion occurs when all $n$ coupon types are simultaneously active.  Equivalently, the current length-$M$ sliding window of draws must contain all $n$ types.

The central object is not the one-time probability that a random window is onto, but the stationary flux of new entries into the onto-window set.  We compute this flux exactly:
\[
        \mu_{n,M}
        =\Pbb(W_{t-1}\text{ is not onto},\ W_t\text{ is onto})
        =\frac{(n-1)(n-1)!S(M-1,n-1)}{n^M},
\]
where $S(\cdot,\cdot)$ denotes a Stirling number of the second kind.  Under a quantitative subcritical separation condition, satisfied in particular by every fixed integer scale $M=\floor{\alpha n\log n}$, $0<\alpha<1$, we prove local declumping and obtain
\[
        \mu_{n,M}T_{n,M}\Rightarrow \Exp(1).
\]
For the fixed subcritical scale $M=\floor{\alpha n\log n}$, $0<\alpha<1$, this gives the logarithmic scale
\[
        \log T_{n,M}=n^{1-\alpha}+o_{\mathbb P}(n^{1-\alpha}),
        \qquad
        \log \Ebb T_{n,M}=n^{1-\alpha}+o(n^{1-\alpha}),
\]
and, when $\alpha>1/2$, the sharper normalization
\[
        n^{-\alpha}e^{-n^{1-\alpha}}T_{n,M}\Rightarrow \Exp(1),
        \qquad
        \Ebb T_{n,M}\sim n^\alpha e^{n^{1-\alpha}}.
\]
Thus the leading scale proposed in the Math StackExchange discussion is made rigorous; the exact finite-$n$ flux gives the canonical normalization throughout the subcritical range.  The result is a sliding-window companion to rare-void entry-flux methods for nonmonotone coupon collectors.
\end{abstract}

\section{Introduction}

The classical coupon collector is monotone: once a type has appeared, that part of the task remains complete.  The expiring coupon collector is nonmonotone.  One coupon is drawn at each time, uniformly from $[n]=\{1,\ldots,n\}$, and the active coupons at time $t$ are precisely those appearing in the last $M$ draws.  The process is complete when the currently active coupons include all $n$ types.

This model was posed on Math StackExchange by mjqxxxx \cite{MSEQuestion}.  The question asks for the expected time to completion and emphasizes the sharp contrast between the classical $\Theta(n\log n)$ scale and the case $M=n$, where a perfect run of $n$ distinct coupons is required.  Joriki's answer \cite{MSEAnswer} gives a detailed heuristic for $M=\floor{\alpha n\log n}$, $0<\alpha<1$, predicting the scale
\[
        n^\alpha\exp(n^{1-\alpha}).
\]
The answer explicitly notes that the approximations are not fully justified, although numerical evidence supports the prediction.

The purpose of this note is to give a clean entry-flux formulation of the problem and to record the resulting rare-entry theorem.  At time $t$, the active collection is exactly the set of types appearing in the last $M$ draws.  Thus completion is the event that a length-$M$ word over $[n]$ is onto.  The target set is not absorbing, and visits occur in overlapping-window clusters.  The correct rare-event normalization is therefore not simply the stationary mass of the onto-window set, but the stationary rate of \emph{new entries} into that set.

This is the same principle underlying related rare-void work on clumsy and careless coupon collectors \cite{LongClumsyCareless}.  In that setting, the schematic principle is
\[
\boxed{
\text{stationary entry flux}+\text{fast mixing}+\text{local declumping}
\Longrightarrow \text{exponential hitting law}.}
\]
The present model supplies a third basic mechanism.  For clumsy collectors the flux comes from a product stationary law; for careless collectors it comes from a lucky-climb high tail; for expiring collectors it comes from a sliding-window surjection count.

\subsection*{Main contribution}

Let
\[
        W_t=(X_{t-M+1},\ldots,X_t)\in[n]^M
\]
be the current full window in the stationary two-sided process, where the $X_i$ are iid uniform on $[n]$.  Let $A_{n,M}$ be the set of onto words in $[n]^M$.  For the one-sided collector started empty, define
\[
        T_{n,M}:=\inf\{t\ge n:\{X_{\max(1,t-M+1)},\ldots,X_t\}=[n]\}.
\]
Thus, once \(t\ge M\), completion is exactly the event \(W_t\in A_{n,M}\).  The stationary law of \(W_t\) is uniform on \([n]^M\), and
\[
        \Pbb(W_t\in A_{n,M})=\frac{n!S(M,n)}{n^M}.
\]
Here and throughout, \(S(\cdot,\cdot)\) denotes a Stirling number of the second kind.
The new-entry flux is
\[
        \mu_{n,M}:=\Pbb(W_{t-1}\notin A_{n,M},\ W_t\in A_{n,M}).
\]
Our first result is the exact identity
\[
\boxed{
        \mu_{n,M}=\frac{(n-1)(n-1)!S(M-1,n-1)}{n^M}.}
\]
Equivalently,
\[
        \mu_{n,M}
        =\left(1-\frac1n\right)^M
          \frac{(n-1)!S(M-1,n-1)}{(n-1)^{M-1}}.
\]
The second form makes the mechanism transparent: the new symbol must be the unique missing type of the previous window, and the middle $M-1$ symbols must already contain all other $n-1$ types.

The rare-entry theorem then gives
\[
        \mu_{n,M}T_{n,M}\Rightarrow\Exp(1)
\]
in subcritical rare-window regimes.  In particular, if
\[
        M=\floor{\alpha n\log n},
        \qquad 0<\alpha<1,
\]
then the exact flux normalization gives
\[
        \log \mu_{n,M}=-n^{1-\alpha}+o(n^{1-\alpha}),
        \qquad
        \log \Ebb T_{n,M}=n^{1-\alpha}+o(n^{1-\alpha}).
\]
For \(\alpha>1/2\), the familiar simpler ratio form is valid:
\[
        n^{-\alpha}e^{-n^{1-\alpha}}T_{n,M}\Rightarrow\Exp(1),
        \qquad
        \Ebb T_{n,M}\sim n^\alpha e^{n^{1-\alpha}},
\]
with the corresponding variance asymptotic.

\section{Sliding-window chain}

Let $(X_t)_{t\in\mathbb Z}$ be iid uniform random variables on $[n]$.  For $M\ge n$, define the length-$M$ window
\[
        W_t=(X_{t-M+1},\ldots,X_t).
\]
The two-sided stationary construction is convenient for flux computations.  The process $(W_t)_{t\in\mathbb Z}$ is the Markov chain on $[n]^M$ that shifts left by one coordinate and appends a fresh uniform symbol.  It has the uniform stationary law on $[n]^M$.

Let
\[
        A_{n,M}:=\{w\in[n]^M:\text{ every symbol in }[n]\text{ appears in }w\}
\]
be the onto-window target set.  For the stationary chain, write
\[
        T^{\rm stat}_{n,M}:=\inf\{t\ge1:W_t\in A_{n,M}\}.
\]
For a one-sided process started empty at time \(0\), the completion time is
\[
        T_{n,M}:=\inf\{t\ge n:\{X_{\max(1,t-M+1)},\ldots,X_t\}=[n]\}.
\]
Completion by time \(M\) means that the initial prefix has already collected all types.  In the rare-window regimes treated below this early event has negligible probability, so the one-sided process can be compared with the stationary window chain after the deterministic burn-in time \(M\).

\begin{proposition}[Stationary target mass]\label{prop:mass}
For $M\ge n$,
\[
        \pi_{n,M}:=\Pbb(W_t\in A_{n,M})
        =\frac{n!S(M,n)}{n^M}
        =\sum_{j=0}^n(-1)^j\binom{n}{j}\left(1-\frac jn\right)^M.
\]
\end{proposition}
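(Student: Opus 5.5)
The plan has three short steps, and we carry them out in order. First, by stationarity of the two-sided construction, $W_t$ consists of $M$ iid uniform symbols on $[n]$. So $W_t$ is uniform on $[n]^M$, and $\pi_{n,M}=|A_{n,M}|/n^M$. It remains to count the onto words of length $M$ over $[n]$ in two ways.

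For the Stirling form, we identify a word $w\in[n]^M$ with the function $f_w:[M]\to[n]$, $i\mapsto w_i$. Then $w\in A_{n,M}$ exactly when $f_w$ is surjective. A surjection $[M]\to[n]$ is determined by two pieces of data:
\begin{itemize}[nosep]
\item the partition of $[M]$ into its $n$ nonempty fibres, which can be chosen in $S(M,n)$ ways;
\item a bijection from these blocks to $[n]$, which can be chosen in $n!$ ways.
\end{itemize}
Hence $|A_{n,M}|=n!\,S(M,n)$. The hypothesis $M\ge n$ is needed only so that this count can be nonzero; the identity itself holds for all $M$.

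For the alternating sum, we apply inclusion--exclusion to the events $B_j=\{w:\ j\text{ does not appear in }w\}$, $j\in[n]$. For any $J\subseteq[n]$ with $|J|=k$, the intersection $\bigcap_{j\in J}B_j$ is the set of words over $[n]\setminus J$. Its probability under the uniform law is $\left(\frac{n-k}{n}\right)^M=(1-k/n)^M$. Inclusion--exclusion then gives
\[
\Pbb\Bigl(\bigcap_{j}B_j^c\Bigr)=\sum_{k=0}^n(-1)^k\binom nk\left(1-\frac kn\right)^M .
\]
This also recovers the classical explicit formula $n!S(M,n)=\sum_k(-1)^k\binom nk(n-k)^M$, so the two expressions agree.

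No step is a genuine obstacle. The only point needing care is the identification of the stationary law of $W_t$ as uniform, and this is immediate from the iid construction.
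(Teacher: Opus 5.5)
Your proof is correct and follows the same route as the paper's: $W_t$ is uniform on $[n]^M$, there are $n!\,S(M,n)$ onto words, and inclusion--exclusion over the missing symbols gives the alternating sum. The only difference is that you spell out the fibre-partition/bijection count and the intersection probabilities, which the paper cites as standard.
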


\begin{proof}
There are $n!S(M,n)$ onto words of length $M$ over an alphabet of size $n$.  Division by the total number $n^M$ gives the Stirling-number expression.  The inclusion-exclusion formula is the usual count of words missing none of the $n$ symbols.
\end{proof}

\section{Exact stationary entry flux}

Define the new-entry indicator
\[
        E_t:=\1\{W_{t-1}\notin A_{n,M},\ W_t\in A_{n,M}\}
\]
and the stationary entry flux
\[
        \mu_{n,M}:=\Pbb(E_t=1).
\]

\begin{theorem}[Exact sliding-window surjection flux]\label{thm:flux}
For every $M\ge n\ge2$,
\[
        \mu_{n,M}
        =\frac{(n-1)(n-1)!S(M-1,n-1)}{n^M}.
\]
Equivalently,
\[
        \mu_{n,M}
        =\left(1-\frac1n\right)^M
          \frac{(n-1)!S(M-1,n-1)}{(n-1)^{M-1}}.
\]
\end{theorem}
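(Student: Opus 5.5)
The plan is to count the length-$(M+1)$ words $(X_{t-M},\ldots,X_t)$ that realize the event $E_t=1$, and then divide by $n^{M+1}$. This works because, in the stationary construction, the pair $(W_{t-1},W_t)$ is a function of the $M+1$ iid uniform symbols $X_{t-M},\ldots,X_t$. Write this word as $(a,u,b)$, where $a=X_{t-M}$ is the symbol that expires, $u=(X_{t-M+1},\ldots,X_{t-1})\in[n]^{M-1}$ is the shared middle block, and $b=X_t$ is the fresh symbol. Then $W_{t-1}=(a,u)$ and $W_t=(u,b)$.

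The key step is to characterize the event combinatorially. Let $S(u)$ be the set of symbols appearing in $u$.
\begin{itemize}
\item $W_t\in A_{n,M}$ means $S(u)\cup\{b\}=[n]$.
\item $W_{t-1}\notin A_{n,M}$ means $S(u)\cup\{a\}\neq[n]$.
\end{itemize}
If $S(u)=[n]$, the second condition fails. So $E_t=1$ forces $|S(u)|\le n-1$. The first condition then forces $S(u)=[n]\setminus\{b\}$ exactly, so $u$ is onto the $(n-1)$-set $[n]\setminus\{b\}$. Given this, the second condition says $a\ne b$, since otherwise $S(u)\cup\{a\}=[n]$. Conversely, any triple with $u$ onto $[n]\setminus\{b\}$ and $a\neq b$ satisfies both conditions, so the characterization is exact. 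This is the "unique missing type" mechanism mentioned in the introduction.

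The count is now immediate. There are $n$ choices of $b$. For each $b$ there are $(n-1)!\,S(M-1,n-1)$ words $u$ that are onto the remaining $n-1$ symbols, which is the same surjection count used in Proposition~\ref{prop:mass}. There are $n-1$ choices of $a\neq b$. Hence
\[
\mu_{n,M}=\frac{n\,(n-1)\,(n-1)!\,S(M-1,n-1)}{n^{M+1}}=\frac{(n-1)(n-1)!\,S(M-1,n-1)}{n^{M}}.
\]
The equivalent form follows by writing $(n-1)/n^M=(1-1/n)^M\,(n-1)^{1-(M-1)}\cdot(n-1)^{M-1}/(n-1)^{M-1}$, which is pure algebra. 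In that form the second factor is the probability that a uniform word in $[n-1]^{M-1}$ is onto. The hypothesis $M\ge n\ge2$ ensures $M-1\ge n-1\ge1$, so the Stirling number is the relevant nonzero surjection count.

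I expect no real obstacle. The only point needing care is the middle step: making sure that the case $S(u)=[n]$ is excluded, and that the missing symbol of $u$ is forced to be $b$ and is different from $a$. Without this, the count would either double count or include windows that were already onto.
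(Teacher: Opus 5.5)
Your proof is correct and is essentially the paper's argument. You use the same exact characterization (middle block onto $[n]\setminus\{b\}$, appended symbol $X_t=b$, departing symbol $X_{t-M}\ne b$), and where the paper multiplies $\frac{(n-1)!S(M-1,n-1)}{n^{M-1}}\cdot\frac1n\cdot\frac{n-1}{n}$ over the $n$ choices of the missing type, you count the $n(n-1)(n-1)!S(M-1,n-1)$ realizing words in $[n]^{M+1}$.

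There is one small slip in your final factoring. The correct identity is $\frac{n-1}{n^M}=\left(1-\frac1n\right)^M(n-1)^{-(M-1)}$. Your exponent $(n-1)^{1-(M-1)}$ introduces an extra factor of $n-1$, and the trailing $(n-1)^{M-1}/(n-1)^{M-1}$ is just $1$.
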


\begin{proof}
Write
\[
        W_{t-1}=(X_{t-M},X_{t-M+1},\ldots,X_{t-1}),
        \qquad
        W_t=(X_{t-M+1},\ldots,X_{t-1},X_t).
\]
Let
\[
        B_t=(X_{t-M+1},\ldots,X_{t-1})
\]
be the common middle block of length $M-1$.

Suppose first that $E_t=1$.  Since $W_t$ is onto and $W_{t-1}$ is not onto, the appended symbol $X_t$ must be a type that is absent from the middle block $B_t$.  Moreover, $B_t$ must contain all the other $n-1$ types; otherwise appending one symbol could not make $W_t$ onto.  Finally, the departing symbol $X_{t-M}$ cannot be the missing type of $B_t$, because if it were, then $W_{t-1}$ would also be onto.

Conversely, if for some $a\in[n]$ the middle block $B_t$ contains every type in $[n]\setminus\{a\}$ and contains no $a$, if $X_t=a$, and if $X_{t-M}\ne a$, then $W_t$ is onto while $W_{t-1}$ is not.  Therefore these conditions are necessary and sufficient.

For a fixed $a$, the probability that $B_t$ is an onto word over the alphabet $[n]\setminus\{a\}$ is
\[
        \frac{(n-1)!S(M-1,n-1)}{n^{M-1}}.
\]
Independently, $\Pbb(X_t=a)=1/n$ and $\Pbb(X_{t-M}\ne a)=(n-1)/n$.  Summing over the $n$ possible choices of $a$ gives
\[
        \mu_{n,M}
        =n\cdot \frac{(n-1)!S(M-1,n-1)}{n^{M-1}}
             \cdot \frac1n\cdot \frac{n-1}{n}
        =\frac{(n-1)(n-1)!S(M-1,n-1)}{n^M}.
\]
The alternative expression follows by factoring
\[
        \frac{(n-1)(n-1)!S(M-1,n-1)}{n^M}
        =\left(1-\frac1n\right)^M
          \frac{(n-1)!S(M-1,n-1)}{(n-1)^{M-1}}.
\]
\end{proof}

\begin{remark}[Flux versus mass]
The stationary mass $\pi_{n,M}$ is the probability that a random window is onto.  The entry flux $\mu_{n,M}$ is the probability that the window becomes onto at the current shift after not being onto at the previous shift.  These are different because onto windows occur in overlapping clumps.  The normalization for the first hitting time is governed by the flux.
\end{remark}

\section{Finite-dependence rare-entry theorem}

This section gives a self-contained rare-entry principle adapted to the sliding-window process.  It is a finite-dependence specialization of the rare-void hitting principle used in related nonmonotone coupon-collector models \cite{LongClumsyCareless}.

For the stationary two-sided process define
\[
        \theta_{n,M}:=\sum_{u=1}^{M}
        \Pbb(E_u=1\mid E_0=1).
\]
This is the expected number of further entries in the dependence range of a given entry.  The condition $\theta_{n,M}\to0$ says that entries are asymptotically declumped.

For a deterministic window \(w\in[n]^M\), let \(\Pbb_w\) denote the law of the sliding-window chain started from \(W_0=w\), and write
\[
        H_A:=\inf\{t\ge0:W_t\in A_{n,M}\}
\]
for the corresponding target-hitting time.

\begin{theorem}[Finite-dependence rare-entry law]\label{thm:finite-dependence}
Assume that, along a sequence $n\to\infty$ with $M=M_n\ge n$,
\[
        \pi_{n,M}\to0,
        \qquad
        \mu_{n,M}\to0,
        \qquad
        M\mu_{n,M}\to0,
        \qquad
        \theta_{n,M}\to0.
\]
Let $T_{n,M}$ be the first completion time for the one-sided expiring collector started empty.  Then
\[
        \mu_{n,M}T_{n,M}\Rightarrow\Exp(1).
\]
If, in addition, there are integers \(b=b_n\) and a constant \(c>0\) such that
\[
        M=o(b),
        \qquad
        b\mu_{n,M}\to0,
        \qquad
        \inf_w \Pbb_w(H_A\le b+M+1)\ge c b\mu_{n,M}
\]
for all sufficiently large \(n\), then for every fixed \(r\ge1\),
\[
        \Ebb(\mu_{n,M}T_{n,M})^r\to r!,
\]
and hence
\[
        \Ebb T_{n,M}\sim\frac1{\mu_{n,M}},
        \qquad
        \Var(T_{n,M})\sim\frac1{\mu_{n,M}^2}.
\]
\end{theorem}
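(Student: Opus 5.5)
We prove the distributional limit by a block (Bernstein) decomposition that uses the $M$-dependence of the entry indicators. The event $\{E_t=1\}$ depends only on $(X_{t-M},\ldots,X_t)$, so $E_s$ and $E_t$ are independent once $|s-t|>M$. We first treat the stationary chain.

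Fix $x>0$ and set $N=\floor{x/\mu_{n,M}}$. We want to show
\[
\Pbb\Bigl(\textstyle\sum_{t=1}^N E_t=0\Bigr)\to e^{-x}.
\]
Choose a block length $\ell=\ell_n$ with $M=o(\ell)$ and $\ell\mu_{n,M}\to0$; such $\ell$ exists because $M\mu_{n,M}\to0$. Partition $[1,N]$ into consecutive big blocks of length $\ell$, separated by gaps of length $M+1$. The gaps carry total expected entry count of order $(N/\ell)M\mu_{n,M}=O(xM/\ell)\to0$, so by Markov's inequality they can be discarded.

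The big blocks are then mutually independent. For a single block $I$, write $Z_I=\sum_{t\in I}E_t$. The Bonferroni/second-moment bounds give
\[
\Ebb Z_I-\Ebb\binom{Z_I}{2}\le\Pbb(Z_I\ge1)\le \Ebb Z_I=\ell\mu_{n,M}.
\]
By stationarity and $M$-dependence, pairs at distance at most $M$ contribute at most $\ell\mu_{n,M}\theta_{n,M}$ to $\Ebb\binom{Z_I}{2}$. Pairs at distance greater than $M$ are independent and contribute at most $(\ell\mu_{n,M})^2$. Hence
\[
\Pbb(Z_I\ge1)=\ell\mu_{n,M}\bigl(1+o(1)\bigr)
\]
uniformly, and the product over the roughly $N/(\ell+M+1)$ independent blocks tends to $e^{-x}$. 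This is exactly the step where local declumping ($\theta_{n,M}\to0$) is used.

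Next we relate entries to hits. $T^{\rm stat}\le N$ holds iff either $W_1\in A_{n,M}$ or some $E_t=1$ with $2\le t\le N$. The first event has probability $\pi_{n,M}\to0$, so $\mu_{n,M}T^{\rm stat}\Rightarrow\Exp(1)$.

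To transfer this to the one-sided collector, couple it with the stationary process by using the same $X_t$ for $t\ge1$. For $t\ge M$ the one-sided and stationary windows coincide. So $T_{n,M}$ and $T^{\rm stat}$ agree except on the event that some window is onto at a time $t<M$ in either process. We bound that event by $\Pbb(\exists\,t\le M: W_t\in A)\le\pi_{n,M}+M\mu_{n,M}\to0$. For the one-sided prefix, the set collected by time $t\le M$ is contained in the set collected by time $M$, which is $W_M$'s content, so completion before $M$ forces $W_M$ onto, an event of probability $\pi_{n,M}$. Hence $\mu_{n,M}T_{n,M}\Rightarrow\Exp(1)$.

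For the moments, we prove a geometric tail bound via the Markov property at multiples of $b+M+1$. The hypothesis $\inf_w\Pbb_w(H_A\le b+M+1)\ge cb\mu_{n,M}$ gives
\[
\Pbb\bigl(T_{n,M}>k(b+M+1)\bigr)\le(1-cb\mu_{n,M})^k.
\]
The same estimate applies to the one-sided process after time $M$, since its window is then a genuine chain state. Consequently
\[
\Pbb(\mu_{n,M}T_{n,M}>y)\le C e^{-c'y}
\]
uniformly in $n$, where we use $M=o(b)$ so that $\mu_{n,M}(b+M+1)\sim\mu_{n,M}b$. This gives uniform integrability of every power $(\mu_{n,M}T_{n,M})^r$, and combined with the distributional limit yields $\Ebb(\mu_{n,M}T_{n,M})^r\to r!$. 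The asymptotics for the mean and variance follow from the cases $r=1$ and $r=2$.

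The main obstacle is the block second-moment estimate. There one must check that the short-range pair sum is exactly governed by $\theta_{n,M}$ via stationarity, $\sum_{s<t\le s+M}\Pbb(E_s=E_t=1)\le\ell\mu_{n,M}\theta_{n,M}$, and one must choose $\ell$ so that both $M/\ell\to0$ and $\ell\mu_{n,M}\to0$.
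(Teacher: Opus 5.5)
Your proposal is correct and follows essentially the same route as the paper: $M$-dependent blocks of length $\gg M$ but $o(1/\mu_{n,M})$ separated by gaps of length $M+1$, a one-block second-moment estimate controlled by $\theta_{n,M}$, conversion of first entry to first hit via $\pi_{n,M}\to0$, a burn-in transfer, and the Markov-property geometric tail at multiples of $b+M+1$ giving uniform integrability. Your explicit coupling (same $X_t$ for $t\ge1$, discrepancy bounded by $\pi_{n,M}+M\mu_{n,M}$) is a more concrete form of the paper's total-variation burn-in step. The geometric tail bound $(1-cb\mu_{n,M})^k$ is only directly justified from time $M$ onward, as you yourself note; the shift by $M$ is then absorbed using $M\mu_{n,M}\to0$.
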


\begin{proof}
We first prove the exponential law under the stationary window process.  Let
\[
        T^{\rm ent}_{n,M}:=\inf\{t\ge1:E_t=1\}
\]
be the first stationary entry time.  Choose integers $b=b_n$ such that
\[
        M=o(b),
        \qquad
        b\mu_{n,M}\to0,
        \qquad
        \frac1{b\mu_{n,M}}\to\infty.
\]
For example one may take $b=\lfloor M^{1/2}\mu_{n,M}^{-1/2}\rfloor$ after a harmless adjustment, using $M\mu_{n,M}\to0$.

Let
\[
        N_b:=\sum_{t=1}^{b}E_t.
\]
Then
\[
        \Ebb N_b=b\mu_{n,M}.
\]
The event $E_t$ depends only on the symbols $X_{t-M},\ldots,X_t$.  Hence $E_s$ and $E_t$ are independent whenever $|s-t|>M$.  Therefore
\[
\begin{aligned}
        \Ebb[(N_b)_2]
        &=2\sum_{1\le s<t\le b}\Pbb(E_s=1,E_t=1)\notag\\
        &\le 2b\mu_{n,M}\sum_{u=1}^{M}\Pbb(E_u=1\mid E_0=1)
            +2b^2\mu_{n,M}^2\notag\\
        &=2b\mu_{n,M}\theta_{n,M}+2b^2\mu_{n,M}^2
        =o(b\mu_{n,M}).
\end{aligned}
\]
Since
\[
        0\le \Ebb N_b-\Pbb(N_b\ge1)\le \Ebb[(N_b)_2],
\]
we have the one-block law
\[
        \Pbb(N_b\ge1)=b\mu_{n,M}(1+o(1)).
\]

Now separate active blocks of length $b$ by gaps of length $M+1$.  Events in distinct active blocks are independent after the separating gaps, because they depend on disjoint sets of underlying iid symbols.  For fixed $x>0$, put
\[
        K_n=\left\lfloor\frac{x}{b\mu_{n,M}}\right\rfloor.
\]
The probability that no entry occurs in the $K_n$ active blocks is
\[
        \left(1-b\mu_{n,M}(1+o(1))\right)^{K_n}\to e^{-x}.
\]
The total expected number of entries in the separating gaps is at most
\[
        K_n(M+1)\mu_{n,M}
        \le \frac{x(M+1)}{b}(1+o(1))\to0,
\]
so entries in the gaps are negligible.  The total time used by the active blocks and gaps is
\[
        K_n(b+M+1)=\frac{x}{\mu_{n,M}}(1+o(1)).
\]
Thus
\[
        \Pbb_{\mathrm{stat}}(\mu_{n,M}T^{\rm ent}_{n,M}>x)\to e^{-x}.
\]
Because \(\pi_{n,M}\to0\), the stationary process starts outside the target with probability tending to one; on that event, the first hit of \(A_{n,M}\) is exactly a new entry.  Hence
\[
        \Pbb_{\mathrm{stat}}(\mu_{n,M}T^{\rm stat}_{n,M}>x)\to e^{-x}.
\]

It remains to pass from the stationary chain to the process started empty.  After $M$ draws, the current length-$M$ window \(W_M=(X_1,\ldots,X_M)\) is iid uniform on \([n]^M\), hence exactly stationary.  Moreover, completion by time \(M\) is exactly the event \(W_M\in A_{n,M}\), so
\[
        \Pbb(T_{n,M}\le M)=\pi_{n,M}\to0.
\]
Conditioned on no completion by time \(M\), the law of \(W_M\) is the uniform stationary law conditioned on \(A_{n,M}^c\).  Its total-variation distance from the unconditioned stationary law is \(\pi_{n,M}\).  Therefore the post-burn-in hitting distribution differs from the stationary one by \(o(1)\), uniformly in the later time horizon.  Also \(M\mu_{n,M}\to0\), so the deterministic burn-in time is negligible on the \(\mu_{n,M}^{-1}\) scale.  This gives
\[
        \mu_{n,M}T_{n,M}\Rightarrow\Exp(1).
\]

The moment assertion follows from the same block argument once the displayed
uniform one-block lower bound is available.  Put
\[
        L_n:=b+M+1,
        \qquad
        p_n:=c b\mu_{n,M}.
\]
The strong Markov property at the deterministic times \(0,L_n,2L_n,\ldots\)
gives, for all large \(n\),
\[
        \sup_w \Pbb_w(H_A>kL_n)
        \le (1-p_n)^k,
        \qquad k\ge0 .
\]
Since \(M=o(b)\) and \(b\mu_{n,M}\to0\), we have
\(\mu_{n,M}L_n=(1+o(1))b\mu_{n,M}\).  Hence, for each \(x\ge0\), choosing
\(k=\lfloor x/(\mu_{n,M}L_n)\rfloor\) yields constants \(C,c_0>0\), independent of
\(n\), such that
\[
        \sup_w\Pbb_w(\mu_{n,M}H_A>x)
        \le C e^{-c_0x} .
\]
The deterministic burn-in contributes only \(M\mu_{n,M}=o(1)\), so the same
exponential domination applies to \(\mu_{n,M}T_{n,M}\) after enlarging \(C\).
Therefore the family \(\{(\mu_{n,M}T_{n,M})^r:n\ge1\}\) is uniformly
integrable for every fixed \(r\ge1\), and the already-proved convergence in
law to \(\Exp(1)\) gives
\(\Ebb(\mu_{n,M}T_{n,M})^r\to \Ebb Z^r=r!\), where \(Z\sim\Exp(1)\).
\end{proof}

\begin{lemma}[Uniform subcritical occupancy lower tail]\label{lem:occupancy-lower-tail}
Let \(N\ge2\) and \(m\ge N\).  If
\[
        \Lambda_{N,m}:=N\left(1-\frac1N\right)^m,
\]
then
\[
        \frac{N!S(m,N)}{N^m}
        \le \exp\{-\Lambda_{N,m}\}.
\]
Consequently, in any range where \(N\le m\le 2N\log N\) and
\(\Lambda_{N,m}\to\infty\), the onto probability is at most
\(\exp\{-c\Lambda_{N,m}\}\) for any fixed \(0<c<1\), for all sufficiently large \(N\).

Moreover, if \(M\le n\log n\), then
\[
        (n-1)\left(1-\frac1{n-1}\right)^{M-1}
        \asymp
        n\left(1-\frac1n\right)^M ,
\]
and hence the same exponential lower-tail bound may be applied to the
\((n-1,M-1)\) occupancy probability appearing in the flux formula.
\end{lemma}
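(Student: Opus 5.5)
The plan is to prove the main inequality by negative dependence of occupancy indicators. Let \(I_j\) be the indicator that symbol \(j\in[N]\) appears in a uniform word of length \(m\). The onto probability is \(\Pbb(\prod_j I_j=1)\). The indicators \(I_1,\ldots,I_N\) are negatively associated, because they are increasing functions of the negatively associated multinomial cell counts (Joag-Dev--Proschan). Hence
\[
\Pbb\Bigl(\bigcap_j\{I_j=1\}\Bigr)\le\prod_j\Pbb(I_j=1)=\Bigl(1-\bigl(1-\tfrac1N\bigr)^m\Bigr)^N\le\exp\Bigl\{-N\bigl(1-\tfrac1N\bigr)^m\Bigr\},
\]
using \(1-x\le e^{-x}\).

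If one prefers to avoid citing negative association, a self-contained alternative is Poissonization. Write the onto probability as \(\Pbb(\text{all }N\text{ cells of a Poisson}(\lambda)\text{ process are occupied}\mid\text{total}=m)\), and use monotonicity in the total. The direct negative-association route is cleaner, so I would use that and cite it.

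The ``consequently'' clause is then immediate. Since \(\exp\{-\Lambda\}\le\exp\{-c\Lambda\}\) for \(c<1\) and \(\Lambda\ge0\), the range hypothesis is not even needed for the bound itself. It only serves to place the statement in the regime of interest.

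For the comparison \((n-1)(1-\frac1{n-1})^{M-1}\asymp n(1-\frac1n)^M\), I would take logarithms. The factor \((n-1)/n\) is bounded above and below. The remaining task is to bound
\[
(M-1)\log\Bigl(1-\tfrac1{n-1}\Bigr)-M\log\Bigl(1-\tfrac1n\Bigr).
\]
Expanding \(\log(1-x)=-x-O(x^2)\), this difference equals \(-M\bigl(\tfrac1{n-1}-\tfrac1n\bigr)+O(M/n^2)+O(1/n)\). Since \(\tfrac1{n-1}-\tfrac1n=\tfrac1{n(n-1)}\), it is \(O(M/n^2)=O(\log n/n)=o(1)\) when \(M\le n\log n\). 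So the ratio tends to \(1\), which gives \(\asymp\).

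The main obstacle is this last step. One must make sure the second-order terms \(M\cdot O(1/n^2)\) stay bounded, and this is exactly where \(M\le n\log n\) (indeed \(M=O(n^2)\) would suffice) enters. Finally, I would apply the first part with \(N=n-1\) and \(m=M-1\ge N\) to get the bound for \((n-1)!S(M-1,n-1)/(n-1)^{M-1}\) in the flux formula of Theorem~\ref{thm:flux}.
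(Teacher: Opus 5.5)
Your proposal is correct and is essentially the paper's proof: negative association of the multinomial counts passed to the indicators \(\1_{\{C_i>0\}}\) (increasing functions of disjoint coordinates), the product of marginals with \(1-x\le e^{-x}\), and an \(O(M/n^2)\) logarithmic comparison that matches the paper's factorization of the ratio through \(\bigl(1-\frac1{(n-1)^2}\bigr)^{M-1}\). The one caveat concerns your Poissonization aside: as sketched, de-Poissonizing by monotonicity in the total only gives the bound up to a constant factor (for instance \(2e^{-\Lambda_{N,m}}\)), which suffices for the ``consequently'' clause but not for the exact displayed inequality, so the negative-association route you chose is the right one.
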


\begin{proof}
Let \(C_i\) be the number of balls in box \(i\) after \(m\) iid draws from
\([N]\), and set \(Y_i=\1_{\{C_i>0\}}\).  The multinomial occupancy vector
\((C_1,\ldots,C_N)\) is negatively associated; this is the standard
negative-association property of multinomial occupancy counts, obtainable
by conditioning independent Poisson counts on their total
\cite{DubhashiRanjan}.  Since the functions \(c\mapsto \1_{\{c>0\}}\) are
coordinatewise increasing and depend on disjoint coordinates, the variables
\(Y_1,\ldots,Y_N\) are also negatively associated.  Therefore
\[
        \Pbb(Y_1=\cdots=Y_N=1)
        \le \prod_{i=1}^N \Pbb(Y_i=1)
        =
        \left(1-\left(1-\frac1N\right)^m\right)^N
        \le \exp\{-\Lambda_{N,m}\}.
\]
The left-hand side is exactly \(N!S(m,N)/N^m\).

For the comparison of the two missing-color means, write
\[
\frac{(n-1)(1-\frac1{n-1})^{M-1}}
     {n(1-\frac1n)^M}
=
\frac{n-1}{n}
\left(\frac{1-\frac1{n-1}}{1-\frac1n}\right)^{M-1}
\left(1-\frac1n\right)^{-1}.
\]
The logarithm of the middle factor is
\[
        (M-1)\log\left(1-\frac{1}{(n-1)^2}\right)
        =O(M/n^2)=O((\log n)/n),
\]
because \(M\le n\log n\).  Thus the ratio is bounded above and below by
absolute positive constants.
\end{proof}

\begin{lemma}[Fresh-block minorization for sliding windows]\label{lem:fresh-block-minorization}
In the sliding-window chain, let \(b=b_n\) satisfy
\[
        M=o(b),\qquad b\mu_{n,M}\to0,
\]
and assume the one-block estimate
\[
        \Pbb_{\rm stat}\left(\sum_{t=1}^b E_t\ge1\right)
        =b\mu_{n,M}(1+o(1)).
\]
Then, uniformly over the starting window \(w\in[n]^M\),
\[
        \Pbb_w(H_A\le M+b+1)
        \ge b\mu_{n,M}(1+o(1)).
\]
In particular, for all sufficiently large \(n\),
\[
        \inf_w\Pbb_w(H_A\le M+b+1)\ge \frac12 b\mu_{n,M}.
\]
\end{lemma}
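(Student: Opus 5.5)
The plan is to exploit the finite memory of the sliding-window chain.  After \(M\) steps from any starting window \(w\), the current window \(W_M=(X_1,\ldots,X_M)\) consists entirely of fresh iid uniform symbols, so it is exactly stationary regardless of \(w\).  From time \(M\) on, the process \((W_{M+s})_{s\ge0}\) therefore has the same law as the stationary chain \((W_s)_{s\ge0}\).  Since hitting \(A_{n,M}\) at some time in \([M, M+b+1]\) implies \(H_A\le M+b+1\), we get
\[
        \Pbb_w(H_A\le M+b+1)\ \ge\ \Pbb_{\rm stat}\bigl(\exists\, s\in\{0,1,\ldots,b+1\}:\ W_s\in A_{n,M}\bigr),
\]
uniformly in \(w\).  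The initial segment \(t\le M\) is simply discarded; this is why the horizon carries the extra \(M\).

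Next I will compare the right-hand side with the one-block entry event.  The key observation is that an entry \(E_t=1\) with \(1\le t\le b\) means in particular \(W_t\in A_{n,M}\).  So the event \(\{\sum_{t=1}^b E_t\ge1\}\), computed in the shifted stationary chain, is contained in the hitting event above.  Because the shifted chain is exactly stationary, the assumed one-block estimate applies directly and gives
\[
        \Pbb_w(H_A\le M+b+1)\ \ge\ \Pbb_{\rm stat}\Bigl(\sum_{t=1}^bE_t\ge1\Bigr)=b\mu_{n,M}(1+o(1)).
\]
The \(o(1)\) term does not depend on \(w\), since \(w\) was eliminated in the first step.  The ``in particular'' statement then follows because \(1+o(1)\ge 1/2\) for all large \(n\).

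The only point needing care, and the one I regard as the main (mild) obstacle, is the indexing.  I need the entry indicators \(E_t\) for the shifted chain to be measurable with respect to the fresh symbols.  The indicator \(E_t\) at shifted time \(t\ge1\) involves \(W_{t-1}\) and \(W_t\), whose symbols are \(X_{M+t-M},\ldots,X_{M+t}\), all with indices at least \(1\).  So these indicators use only the fresh draws.  The worst case is \(t=b\), which corresponds to absolute time \(M+b\le M+b+1\), so everything fits inside the stated horizon.  Once this is checked, nothing from the hypotheses \(M=o(b)\) or \(b\mu_{n,M}\to0\) is actually used beyond guaranteeing that the one-block estimate is meaningful.
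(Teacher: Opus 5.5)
Your proof is correct and is essentially the paper's argument: after the first $M$ steps the window consists only of fresh iid symbols, so the entry indicators at absolute times $M+1,\ldots,M+b$ have exactly the stationary joint law of $(E_1,\ldots,E_b)$, and any such entry is a hit of $A_{n,M}$, which gives the bound uniformly in $w$. Your observation that $M=o(b)$ and $b\mu_{n,M}\to0$ are not used beyond the assumed one-block estimate also matches the paper's proof.
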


\begin{proof}
Start the chain from an arbitrary deterministic window \(w\) at time \(0\),
and let \(X_1,X_2,\ldots\) be the fresh subsequent symbols.  For
\(M+1\le t\le M+b\), the entry event \(E_t\) depends only on the fresh
symbols \(X_{t-M},X_{t-M+1},\ldots,X_t\).  Hence the process
\[
        (E_{M+1},E_{M+2},\ldots,E_{M+b})
\]
has exactly the same distribution as
\[
        (E_1,E_2,\ldots,E_b)
\]
under the stationary two-sided iid construction.  If any of these entry
events occurs, then the target \(A_{n,M}\) is hit.  Therefore
\[
        \Pbb_w(H_A\le M+b)
        \ge
        \Pbb_{\rm stat}\left(\sum_{t=1}^b E_t\ge1\right)
        =b\mu_{n,M}(1+o(1)),
\]
uniformly in \(w\).  Enlarging \(M+b\) to \(M+b+1\) gives the displayed
form.
\end{proof}

\begin{remark}[Why the theorem uses flux]
If visits to $A_{n,M}$ last for a typical clump length $\ell_{n,M}$, then one expects
\[
        \mu_{n,M}\asymp \frac{\pi_{n,M}}{\ell_{n,M}}.
\]
For sliding windows below the coupon-collector threshold, onto windows may persist for many consecutive shifts once they appear, but new entries are much rarer than individual onto windows.  The first hitting time is controlled by the entry rate.
\end{remark}

\section{Declumping in the subcritical window regime}

The remaining input for Theorem~\ref{thm:finite-dependence} is the short-return estimate
\[
        \theta_{n,M}:=\sum_{u=1}^{M}\Pbb(E_u=1\mid E_0=1)\to0.
\]
We prove it by reducing a possible second entry to an occupancy problem for the last occurrences in the window that caused the first entry.

Throughout this section put
\[
        m:=M-1,\qquad N:=n-1,
        \qquad
        \lambda_{n,M}:=n\left(1-\frac1n\right)^M.
\]
Under the standing subcritical assumption
\[
        n\le M\le n\log n,
        \qquad
        \lambda_{n,M}\to\infty,
\]
one has a rare window.  The fixed-sum declumping estimate below is proved under the quantitative separation
\[
        \frac{\lambda_{n,M}}{(\log M)^2}\to\infty.
\]
This still includes every fixed integer scale \(M=\floor{\alpha n\log n}\), \(0<\alpha<1\), and the endpoint \(M=n\).

\begin{lemma}[Conditional two-entry reduction]\label{lem:two-entry-reduction}
Condition on \(E_0=1\), and let \(a=X_0\) be the color appended at the entry time.  Then the middle block
\[
        U=(X_{-M+1},\ldots,X_{-1})
\]
is a uniform onto word of length \(m=M-1\) over the alphabet \([n]\setminus\{a\}\).  For \(1\le u\le m\), define
\[
        R_u:=\#\bigl\{c\in[n]\setminus\{a\}:
        \text{all occurrences of }c\text{ in }U\text{ lie among }U_1,\ldots,U_u\bigr\}.
\]
Then, conditional on \(U\) and \(E_0\),
\begin{equation}\label{eq:entry-pair-bound}
        \Pbb(E_u=1\mid U,E_0)
        \le
        \frac{R_u}{n}\left(1-\frac1n\right)^{u-1}
        \exp\left\{-(R_u-1)_+\left(1-\frac1{n-1}\right)^{u-1}\right\}.
\end{equation}
Also
\begin{equation}\label{eq:endpoint-pair-bound}
        \Pbb(E_M=1\mid E_0)=\mu_{n,M}.
\end{equation}
\end{lemma}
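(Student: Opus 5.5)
The plan is to make the conditioning on \(E_0=1\) explicit using the characterization in the proof of Theorem~\ref{thm:flux}, and then read off \(E_u\) as an event about the fresh symbols \(X_1,\ldots,X_u\) together with a suffix of \(U\).

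\textbf{Conditional law of \(U\).} By that proof, \(E_0=1\) holds exactly when, for some \(a\in[n]\), three things happen: \(U\) is onto \([n]\setminus\{a\}\), \(X_0=a\), and \(X_{-M}\ne a\). Given \(a\), these three conditions concern disjoint sets of iid uniform coordinates. Hence conditionally on \(E_0\) and \(X_0=a\):
\begin{itemize}
\item \(U\) is uniform over onto words of length \(m\) on \([n]\setminus\{a\}\);
\item \(X_{-M}\) is uniform on \([n]\setminus\{a\}\) and independent of \(U\);
\item the future symbols \(X_1,X_2,\ldots\) are iid uniform and independent of everything at times \(\le0\).
\end{itemize}
This gives the first claim.

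\textbf{Reducing \(E_u\) for \(1\le u\le m\).} The middle block for the step \(u-1\to u\) is
\[
B_u=(U_{u+1},\ldots,U_m,\,a,\,X_1,\ldots,X_{u-1}).
\]
Any color missing from \(B_u\) must differ from \(a\) and must be absent from \(U_{u+1},\ldots,U_m\). Since \(U\) is onto \([n]\setminus\{a\}\), such a color is one of the \(R_u\) colors whose occurrences all lie among \(U_1,\ldots,U_u\). Call this set \(\mathcal R_u\).

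By the necessity part of the proof of Theorem~\ref{thm:flux}, \(E_u=1\) requires a single color \(c\in\mathcal R_u\) with three properties:
\begin{itemize}
\item \(c\) is absent from \(X_1,\ldots,X_{u-1}\);
\item every other color of \(\mathcal R_u\) appears among \(X_1,\ldots,X_{u-1}\);
\item \(X_u=c\).
\end{itemize}
Dropping the departing-symbol condition only enlarges the event. A union bound over \(c\in\mathcal R_u\) produces the factor \(R_u\), and for each fixed \(c\) I would estimate as follows.

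\textbf{Estimating a fixed \(c\).} The bound for a fixed \(c\) is the product of three factors:
\begin{itemize}
\item \(\Pbb(X_u=c)=1/n\);
\item \(\Pbb(c\notin\{X_1,\ldots,X_{u-1}\})=(1-1/n)^{u-1}\);
\item the probability that all other \(R_u-1\) colors of \(\mathcal R_u\) appear, given that \(c\) is absent.
\end{itemize}
Under the conditioning on \(c\) being absent, \(X_1,\ldots,X_{u-1}\) are iid uniform on the \(N=n-1\) colors \([n]\setminus\{c\}\). The probability that the other \(R_u-1\) colors all appear is then bounded through negative association of multinomial occupancy indicators, exactly as in Lemma~\ref{lem:occupancy-lower-tail}, now applied to a subset of the boxes. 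This gives
\[
\prod\bigl(1-(1-\tfrac1{n-1})^{u-1}\bigr)\le\exp\{-(R_u-1)_+(1-\tfrac1{n-1})^{u-1}\}.
\]
Multiplying the three factors and summing over \(c\) gives \eqref{eq:entry-pair-bound}.

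I expect the main care point to be justifying negative association for a sub-collection of occupancy indicators. This follows because monotone functions of disjoint coordinates of an NA vector remain NA, and the factors for colors outside the sub-collection are simply dropped. A second care point is that \(\mathcal R_u\) is a function of \(U\) alone, so conditioning on \(U\) leaves the \(X_i\) fresh.

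\textbf{The endpoint \(E_M\).} The event \(E_M\) depends on \(X_0,\ldots,X_M\), so given \(E_0\) it depends on the past only through \(X_0=a\). Repeating the count from Theorem~\ref{thm:flux} with the departing symbol \(X_0=a\) fixed, the missing color \(b\) must satisfy \(b\ne a\). With \(q=(n-1)!S(M-1,n-1)/n^{M-1}\), this gives
\[
\sum_{b\ne a} q\cdot\tfrac1n=\tfrac{n-1}{n}\,q\cdot n\cdot\tfrac1n=\mu_{n,M},
\]
independently of \(a\). This proves \eqref{eq:endpoint-pair-bound}.
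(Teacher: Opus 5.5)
Your proposal is correct and follows essentially the same argument as the paper. You identify \(U\) as a uniform onto word via the entry characterization from Theorem~\ref{thm:flux}, and you locate the possible new missing color among the \(R_u\) shifted-out colors. You then take a union bound over that color, with the factors \(1/n\), \((1-1/n)^{u-1}\) and a negative-association product bound. Finally, you recount \(E_M\) with the departing symbol \(X_0=a\), which gives \(\mu_{n,M}\).
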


\begin{proof}
The assertion about \(U\) follows directly from the exact entry characterization in Theorem~\ref{thm:flux}: at an entry, the common middle block is onto over the \(n-1\) colors different from the newly appended color, and contains no copy of that new color.

Fix \(1\le u\le m\).  At time \(u\), the middle block for the event \(E_u\) is
\[
        (X_{u-M+1},\ldots,X_{-1},X_0,X_1,\ldots,X_{u-1}).
\]
If \(E_u=1\), its missing color cannot be \(a\), because \(X_0=a\) belongs to this middle block.  Hence the missing color is some \(b\in[n]\setminus\{a\}\).  Since \(U\) originally contained \(b\), all occurrences of \(b\) in \(U\) must have been shifted out by time \(u\); thus \(b\) is one of the \(R_u\) colors counted above.  The appended symbol must satisfy \(X_u=b\).  In addition, the future symbols \(X_1,\ldots,X_{u-1}\) must avoid \(b\), and they must contain every other color among the \(R_u-1\) colors whose old occurrences have already shifted out.

For a fixed choice of \(b\), the probability that \(X_u=b\) is \(1/n\).  The probability that \(X_1,\ldots,X_{u-1}\) avoid \(b\) is \((1-1/n)^{u-1}\).  Conditional on this avoidance, the symbols \(X_1,\ldots,X_{u-1}\) are iid on an alphabet of size \(n-1\).  The indicators that specified colors appear are negatively associated in the usual balls-into-boxes model; equivalently, the probability that all \(r\) specified colors appear is at most the product of their marginal probabilities.  Therefore the probability that the \(R_u-1\) required colors are all present is at most
\[
        \left(1-\left(1-\frac1{n-1}\right)^{u-1}\right)^{(R_u-1)_+}
        \le
        \exp\left\{-(R_u-1)_+\left(1-\frac1{n-1}\right)^{u-1}\right\}.
\]
Summing over the at most \(R_u\) possible choices of \(b\) gives \eqref{eq:entry-pair-bound}.

For \(u=M\), the event \(E_M\) uses the fresh middle block \((X_1,\ldots,X_{M-1})\), the appended symbol \(X_M\), and the departing symbol \(X_0=a\).  Conditional on \(E_0\), these fresh symbols are independent of the past.  The event \(E_M\) occurs exactly when, for some \(b\in[n]\setminus\{a\}\), the fresh middle block contains every color in \([n]\setminus\{b\}\) and contains no \(b\), and \(X_M=b\).  Hence
\[
        \Pbb(E_M=1\mid E_0)
        =(n-1)\frac{(n-1)!S(M-1,n-1)}{n^{M-1}}\frac1n
        =\mu_{n,M},
\]
by Theorem~\ref{thm:flux}.
\end{proof}

We next isolate the only conditioning estimate needed below.  It is deliberately stated with a polynomial loss, since no sharp de-Poissonization is required.

\begin{lemma}[Polynomial conditioning cost]\label{lem:poly-conditioning-cost}
Let \(N\ge2\), \(m\ge N+1\), and \(1\le u\le m-1\), with \(m/N\le 2\log N\).  Let \(\tau>0\) be determined by
\[
        \frac{\tau}{1-e^{-\tau}}=\frac{m}{N}.
\]
For each color \(c\), let \((A_c,B_c)\) be independent split Poisson variables with means \(\tau u/m\) and \(\tau(m-u)/m\), conditioned on \(A_c+B_c\ge1\).  Write \(\Pbb_*\) for this product law and set
\[
        \mathcal C_u:=\left\{
        \sum_{c=1}^N A_c=u,
        \quad
        \sum_{c=1}^N(A_c+B_c)=m
        \right\}.
\]
There is an absolute constant \(C<\infty\) such that, uniformly in \(N,m,u\) in this range,
\[
        \Pbb_*(\mathcal C_u)\ge m^{-C}.
\]
Consequently, for every nonnegative functional \(H\) of the split occupancy configuration,
\[
        \Ebb_{\mathrm{onto}}H
        =\Ebb_*(H\mid \mathcal C_u)
        \le m^C\Ebb_*H.
\]
\end{lemma}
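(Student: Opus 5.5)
The plan is to factor \(\Pbb_*(\mathcal C_u)\) into a total-sum event and a split event, and bound each factor polynomially. Before that, I would check the conditioning identity, since it fixes the meaning of \(\Ebb_{\mathrm{onto}}\). Write \(T_c:=A_c+B_c\). Under \(\Pbb_*\), the \(T_c\) are iid zero-truncated Poisson\((\tau)\), and given \(T_c\) the split \(A_c\) is \(\Bin(T_c,u/m)\), independently over \(c\).

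\emph{Step 1 (the identity).} Conditioning the product \(\prod_c \tau^{t_c}/t_c!\) (with \(t_c\ge1\)) on \(\sum_c T_c=m\) gives weights proportional to \(m!/\prod_c t_c!\) on \(t_c\ge1\). This is exactly the law of the occupancy counts of a uniform onto word of length \(m\) over \(N\) colors. Independent binomial splits \(\Bin(t_c,u/m)\) conditioned on \(\sum_c A_c=u\) form the multivariate hypergeometric law. That is the law of the counts among the first \(u\) letters of a uniform arrangement of the multiset, i.e.\ of the split \((U_{\le u},U_{>u})\) of a uniform onto word. Hence \(\Ebb_{\mathrm{onto}}H=\Ebb_*(H\mid\mathcal C_u)\), and by nonnegativity \(\Ebb_*(H\mid\mathcal C_u)\le \Ebb_*H/\Pbb_*(\mathcal C_u)\). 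The claimed bound therefore reduces to the probability estimate.

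\emph{Step 2 (factorization).} Write
\[
\Pbb_*(\mathcal C_u)=\Pbb_*\Bigl(\sum_c T_c=m\Bigr)\,\Pbb_*\Bigl(\sum_c A_c=u\ \Bigm|\ \sum_c T_c=m\Bigr).
\]
Given the totals, the \(A_c\) are independent \(\Bin(T_c,u/m)\), so \(\sum_c A_c\sim\Bin(m,u/m)\) exactly. Its mode is the integer \(u\), so the second factor is at least \(c/\sqrt{m}\) by the standard bound on the modal binomial probability (Stirling).

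\emph{Step 3 (the total-sum local bound).} This is the main obstacle. The choice of \(\tau\) makes \(\Ebb_*T_c=\tau/(1-e^{-\tau})=m/N\), so \(S:=\sum_cT_c\) has mean exactly \(m\). Rather than proving a uniform local CLT, I would argue as follows.
\begin{itemize}[label={}]
\item \emph{Log-concavity.} The truncated Poisson pmf \(\propto\tau^k/k!\) on \(k\ge1\) is log-concave, and convolutions of log-concave integer sequences are log-concave. Hence \(S\) has a log-concave pmf.
\item \emph{Variance.} \(\Var S\le N\,\Ebb_*T_c^2\le N(\tau+\tau^2)/(1-e^{-\tau})\). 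Since \(m/N\le 2\log N\) forces \(\tau=O(\log N)\), this gives \(\Var S=O(m\log m)\).
\item \emph{Lower bound at the mean.} For a log-concave integer variable, the pmf at the integer mean is at least \(c/(1+\sigma)\). If that quoted fact is not taken for granted: Chebyshev puts mass \(\ge 3/4\) in \([m-2\sigma,m+2\sigma]\), and unimodality with a log-concave tail comparison shows the pmf at \(m\) is at least a constant fraction of the average over that interval.
\end{itemize}
This yields \(\Pbb_*(S=m)\ge c\,(m\log m)^{-1/2}\).

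Combining Steps 2 and 3 gives \(\Pbb_*(\mathcal C_u)\ge c'm^{-1}(\log m)^{-1/2}\ge m^{-C}\) for an absolute \(C\). The remaining care is uniformity at the edges. The constraint \(m\ge N+1\) keeps \(\tau\) bounded away from \(0\) in a controlled way. When \(\tau\) is small, \(S-N\) is nearly Poisson with small mean and the mean-point bound is still fine, since the variance is small. Finally, \(1\le u\le m-1\) keeps the binomial nondegenerate.
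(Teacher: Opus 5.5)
Your proof is correct, but Step 3 takes a different route from the paper.

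\textbf{Steps 1--2.} These coincide in substance with the paper. You reach the identification \(\Ebb_{\mathrm{onto}}H=\Ebb_*(H\mid\mathcal C_u)\) through onto occupancy counts plus multivariate hypergeometric splits. That sequential conditioning is legitimate because \(\sum_cA_c\) given \((T_c)\) is \(\Bin(m,u/m)\) whatever the individual \(T_c\) are. The paper instead compares the two weight formulas directly on \(\mathcal C_u\). The factorization \(\Pbb_*(\mathcal C_u)=\Pbb_*(S=m)\,\Pbb(\Bin(m,u/m)=u)\) and the \(cm^{-1/2}\) modal bound are the same as the paper's.

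\textbf{Step 3.} Here the routes differ. The paper invokes a uniform Fourier-inversion local limit theorem for zero-truncated Poisson sums (Lemma~\ref{lem:uniform-ztp-llt}). That needs characteristic-function damping bounds uniform in \(\tau\). When \(B=N\sigma_\tau^2\) stays bounded, it also needs a separate explicit count (exactly \(m-N\) of the \(D_c\) equal \(2\), the rest equal \(1\)). Your route combines three ingredients:
\begin{itemize}
\item log-concavity of \(k\mapsto\tau^k/k!\) on \(k\ge1\), preserved under convolution;
\item the variance bound \(\Var S\le N\,\Ebb_*T_c^2=m(1+\tau)\le m(1+2\log N)\);
\item a lower bound at the integer mean for log-concave laws.
\end{itemize}
This is more elementary and uniform in \(\tau\), with no case split. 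What it gives up is the sharp constant \((2\pi B)^{-1/2}\), which this lemma does not need.

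\textbf{The step to write out.} You should make the mean-point bound explicit. Chebyshev plus unimodality only controls the pmf at the mode; passing to the mean must use \(\sum_k(k-m)p_k=0\). The argument is short:
\begin{itemize}
\item Suppose the mode is \(k_0=m+d>m\) (the other case is symmetric) and write \(p_{k_0}/p_m=e^L\).
\item Concavity of \(\log p\) gives \(p_{m-j}\le p_me^{-jL/d}\) for \(j\ge1\), and \(p_{m+i}\ge p_me^{iL/d}\) for \(0\le i\le d\).
\item The balance identity then forces \(\sum_{i=1}^d ie^{iL/d}\le e^{L/d}/(e^{L/d}-1)^2\), which bounds \(L\) by an absolute constant.
\item Hence \(p_m\ge c\max_kp_k\ge c'/(1+\sigma)\), the last step by Chebyshev as you say.
\end{itemize}

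\textbf{A minor correction.} The condition \(m\ge N+1\) does not keep \(\tau\) bounded away from \(0\): at \(m=N+1\) one has \(\tau\approx2/N\). Your argument never uses this, so nothing breaks.
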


\begin{proof}
We first justify the displayed identity between the uniform onto law and the
conditioned split Poisson law.  If a uniform onto word of length \(m\) over
\([N]\) is split after \(u\) positions, then a split occupancy configuration
\((a_c,b_c)_{c=1}^N\), with \(a_c+b_c\ge1\), \(\sum_c a_c=u\), and
\(\sum_c(a_c+b_c)=m\), has probability proportional to
\[
        \frac{u!}{\prod_c a_c!}\frac{(m-u)!}{\prod_c b_c!}.
\]
Under \(\Pbb_*\), conditioned on \(\mathcal C_u\), the same configuration has
probability proportional to
\[
        \prod_{c=1}^N
        \frac{(\tau u/m)^{a_c}(\tau(m-u)/m)^{b_c}}{a_c!b_c!}.
\]
On \(\mathcal C_u\), the powers of \(\tau u/m\) and \(\tau(m-u)/m\) are
constant, so the two conditional laws are identical.  Thus
\(\Ebb_{\mathrm{onto}}H=\Ebb_*(H\mid\mathcal C_u)\) for every nonnegative
functional \(H\) of the split occupancy configuration.

Let \(D_c=A_c+B_c\).  Under \(\Pbb_*\), the variables \(D_c\) are iid
zero-truncated Poisson variables with parameter \(\tau\), and
\[
        \Ebb_*D_c=\frac{\tau}{1-e^{-\tau}}=\frac mN.
\]
Thus \(S_N:=\sum_{c=1}^N D_c\) has mean \(m\).  Conditional on the
total \(S_N=k\), each of the \(k\) points falls in the first block with
probability \(u/m\), independently of its color-count allocation.  Hence
\[
        \Pbb_*(\mathcal C_u)
        =\Pbb_*(S_N=m)\,
          \Pbb\left(\Bin\left(m,\frac um\right)=u\right).
\]
By Stirling's formula, uniformly for \(1\le u\le m-1\),
\[
        \Pbb\left(\Bin\left(m,\frac um\right)=u\right)
        \ge c m^{-1/2}
\]
for an absolute constant \(c>0\).

It remains to give a polynomial lower bound for \(\Pbb_*(S_N=m)\).  Let
\[
        \sigma_\tau^2:=\Var_*(D_1),
        \qquad
        B:=N\sigma_\tau^2.
\]
Since \(m/N\le2\log N\), the defining relation
\(\tau/(1-e^{-\tau})=m/N\) implies \(0<\tau\le C\log m\).  If
\(B\) is sufficiently large, Lemma~\ref{lem:uniform-ztp-llt} gives
\[
        \Pbb_*(S_N=m)=(2\pi B)^{-1/2}(1+o(1)).
\]
Moreover, the same lemma records \(\sigma_\tau^2\asymp\tau\) in this
range, and therefore
\[
        B=N\sigma_\tau^2\le C N\tau\le C N\log N\le C m\log m.
\]
Thus \(\Pbb_*(S_N=m)\ge m^{-C}\) whenever \(B\) is large.

For the remaining bounded-\(B\) regime, \(\tau=O(1/N)\).  Indeed, for small \(\tau\) the
zero-truncated Poisson law satisfies \(\sigma_\tau^2\asymp\tau\), while
for \(\tau\) bounded away from \(0\) the variance is bounded below by an
absolute positive constant.  Hence
\[
        k:=m-N
        =N\left(\frac{\tau}{1-e^{-\tau}}-1\right)
        =O(N\tau)=O(1).
\]
In this case the event that exactly \(k\) of the \(D_c\)'s equal \(2\) and
all remaining \(D_c\)'s equal \(1\) implies \(S_N=m\).  Its probability is
\[
        \binom Nk p_2^k p_1^{N-k},
        \qquad
        p_j:=\Pbb_*(D_1=j).
\]
For \(\tau=O(1/N)\),
\[
        p_1=1-\frac{\tau}{2}+O(\tau^2),
        \qquad
        p_2=\frac{\tau}{2}+O(\tau^2),
\]
and since \(k=O(1)\), the displayed probability is bounded below by a
negative power of \(m\).  Therefore \(\Pbb_*(S_N=m)\ge m^{-C}\) in all
cases.  Combining this with the binomial factor gives
\(\Pbb_*(\mathcal C_u)\ge m^{-C}\), after increasing \(C\).  The displayed
conditioning inequality follows.
\end{proof}

We also isolate the elementary saddle-point comparison used in the weighted
last-occurrence estimate.

\begin{lemma}[Last-occurrence scale comparison]\label{lem:last-occurrence-scale}
Let \(N=n-1\), \(m=M-1\), and assume
\[
        n\le M\le n\log n,
        \qquad
        M>n.
\]
Let \(\tau>0\) be defined by
\[
        \frac{\tau}{1-e^{-\tau}}=\frac mN=:\rho .
\]
For \(1\le u\le m-1\), put
\[
        q_u:=\left(1-\frac1N\right)^{u-1},
        \qquad
        y_u:=1-e^{-u/n},
\]
and
\[
        \Lambda_u
        :=N\frac{e^{-\tau(m-u)/m}(1-e^{-\tau u/m})}{1-e^{-\tau}}.
\]
Then, uniformly for \(1\le u\le m-1\),
\[
        q_u\Lambda_u\asymp
        \lambda_{n,M}y_u,
        \qquad
        \lambda_{n,M}:=n\left(1-\frac1n\right)^M.
\]
\end{lemma}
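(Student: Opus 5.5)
We compare the three factors separately. Write \(s=u/m\in(0,1)\), so that
\[
q_u\Lambda_u=\underbrace{N\frac{e^{-\tau}}{1-e^{-\tau}}}_{(\mathrm{I})}\cdot\underbrace{\left(1-\frac1N\right)^{u-1}e^{\tau s}}_{(\mathrm{II})}\cdot\underbrace{\left(1-e^{-\tau s}\right)}_{(\mathrm{III})}.
\]
The plan is to show \((\mathrm{I})\asymp\lambda_{n,M}\), \((\mathrm{II})\asymp1\) uniformly in \(u\), and \((\mathrm{III})\asymp y_u=1-e^{-u/n}\) uniformly in \(u\). Multiplying the three comparisons gives the claim.

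\emph{Step 1: the saddle point \(\tau\).} From \(\tau/(1-e^{-\tau})=\rho=m/N\), we get \(\tau=\rho(1-e^{-\tau})\), i.e. \(\tau=\rho-\rho e^{-\tau}\). Since \(1<\rho\le C\log n\), we have \(\rho e^{-\tau}\le\rho e^{-(\rho-1)}\cdot e^{\rho e^{-\tau}}\), which gives an a priori bound. I would handle two regimes. If \(\rho\) is large, then \(\tau=\rho-O(\rho e^{-\rho})\), so \(\tau/\rho=1+O(e^{-\rho})\). If \(\rho\) is bounded, then \(\tau\asymp\rho-1\) up to constants, with \(\tau\to0\) as \(\rho\downarrow1\) (from \(\tau/(1-e^{-\tau})=1+\tau/2+O(\tau^2)\)). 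In all cases \(\tau\le\rho\), and \(\rho-\tau=\rho e^{-\tau}\) is bounded.

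\emph{Step 2: factor (I).} We have \((\mathrm{I})=N\rho e^{-\tau}/\tau=m e^{-\tau}/\tau\). Also \(\lambda_{n,M}=n(1-1/n)^M=n\exp\{-M/n+O(M/n^2)\}\asymp ne^{-M/n}\), using \(M\le n\log n\). Since \(M/n=\rho+O(\rho/n)+O(1/n)\), this gives \(\lambda\asymp Ne^{-\rho}\). So we need \(m e^{-\tau}/\tau\asymp Ne^{-\rho}\), i.e. \(\rho e^{\rho-\tau}/\tau\asymp1\). For large \(\rho\) this holds since \(\rho-\tau=O(1)\) and \(\rho/\tau\to1\). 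For bounded \(\rho\) it is the delicate case: \(\rho/\tau\) blows up as \(\tau\to0\). The identity \(\tau/\rho=1-e^{-\tau}\) gives \(\rho/\tau=1/(1-e^{-\tau})\asymp1/\tau\), which is not bounded.

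This is the main obstacle. I would resolve it by going back to \(\lambda\) directly when \(\rho\) is bounded. There \(\lambda=n(1-1/n)^M\asymp ne^{-\rho}\asymp N\), while \(m e^{-\tau}/\tau\asymp m/\tau\) is much larger when \(\tau\) is small. Hence the comparison must be done jointly with (III), whose factor \(1-e^{-\tau s}\asymp \tau s\) compensates. The right grouping for bounded \(\rho\) is \((\mathrm{I})\cdot(\mathrm{III})\asymp (m/\tau)\cdot\tau u/m=u\). This should be compared with \(\lambda y_u\asymp N\min(1,u/n)\asymp u\) since \(u\le m=O(n)\). So I would split into \(\rho\ge\rho_0\), using the factorwise comparisons, and \(\rho<\rho_0\), using the joint comparison. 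For \(\rho\ge\rho_0\), \(\tau\) is bounded below, so \(1-e^{-\tau s}\asymp\min(1,\tau s)\asymp\min(1,\rho u/m)=\min(1,u/N)\asymp y_u\).

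\emph{Step 3: factor (II).} Write \(\log(\mathrm{II})=\tau u/m-(u-1)/N+O(u/N^2)=u(\tau/m-1/N)+O(1)+O(\log n/n)\). Since \(\tau/m-1/N=(\tau-\rho)/m=-\rho e^{-\tau}/m\), the main term is \(-u\rho e^{-\tau}/m\). This quantity is at least \(-\rho e^{-\tau}=-(\rho-\tau)\), which is bounded. Hence \((\mathrm{II})\asymp1\) uniformly in \(1\le u\le m-1\), in both regimes. Combining Steps 1 through 3 gives \(q_u\Lambda_u\asymp\lambda_{n,M}y_u\) uniformly.
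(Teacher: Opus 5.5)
Your argument is correct and is essentially the paper's proof. Both rest on the saddle-point identity \(\tau/\rho=1-e^{-\tau}\), the bound \(\rho-\tau=\rho e^{-\tau}=\tau/(e^{\tau}-1)\le 1\), the comparisons \(q_u\asymp e^{-u/N}\) and \(\lambda_{n,M}\asymp Ne^{-\rho}\), and a small-\(\tau\)/large-\(\tau\) case split; that one-line identity is also the clean justification for the somewhat loose ``a priori bound'' in your Step~1. The difference is only bookkeeping: the paper never separates your factors (I) and (III), but bounds \((1-e^{-\tau u/m})/(1-e^{-\tau})\) below by \(1-e^{-u/N}\) via concavity of \(a\mapsto 1-e^{-av}\), so the small-\(\tau\) cancellation you handle by regrouping (I)(III) when \(\rho<\rho_0\) is absorbed automatically and the case split is needed only for the upper bound.
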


\begin{proof}
Set \(v=u/N\).  Since \(1\le u\le m-1\), we have \(0<v<\rho\).  Also
\(\tau/\rho=1-e^{-\tau}\).  We first compare the main continuous factors.
Using the saddle-point relation,
\[
\begin{aligned}
&\frac{e^{-v}e^{-\tau(1-v/\rho)}(1-e^{-\tau v/\rho})/(1-e^{-\tau})}
        {e^{-\rho}(1-e^{-v})}  \\
&\qquad
=\exp\{(\rho-\tau)+v(\tau/\rho-1)\}
  \frac{1-e^{-(1-e^{-\tau})v}}
       {(1-e^{-\tau})(1-e^{-v})}  \\
&\qquad
=\exp\{e^{-\tau}(\rho-v)\}
  \frac{1-e^{-(1-e^{-\tau})v}}
       {(1-e^{-\tau})(1-e^{-v})}.
\end{aligned}
\]
The exponential factor is bounded above and below by absolute constants,
because \(0<v<\rho\) and
\[
        \rho e^{-\tau}=\frac{\tau}{e^\tau-1}\le1.
\]
For the second factor, write \(a=1-e^{-\tau}\).  Concavity of
\(a\mapsto1-e^{-av}\) gives
\[
        1-e^{-av}\ge a(1-e^{-v}),
\]
so the factor is at least \(1\).  For the upper bound, split into two cases.
If \(\tau\le1\), then \(\rho=\tau/(1-e^{-\tau})\le C\), so
\[
        \frac{1-e^{-av}}{a(1-e^{-v})}
        \le \frac{v}{1-e^{-v}}
        \le C
        \qquad(0<v<\rho).
\]
If \(\tau\ge1\), then \(a\ge1-e^{-1}\), and the factor is at most
\(1/a\le C\).  Hence
\[
        e^{-v}\,\frac{e^{-\tau(1-v/\rho)}(1-e^{-\tau v/\rho})}{1-e^{-\tau}}
        \asymp e^{-\rho}(1-e^{-v})
\]
uniformly for \(0<v\le\rho\).  Multiplying by \(N\) gives
\[
        e^{-v}\Lambda_u\asymp N e^{-\rho}(1-e^{-v}).
\]
Finally,
\[
        q_u\asymp e^{-u/N}=e^{-v},
        \qquad
        1-e^{-v}\asymp 1-e^{-u/n}=y_u,
\]
uniformly in the present range.  Also
\[
        N e^{-m/N}\asymp n\left(1-\frac1n\right)^M=\lambda_{n,M},
\]
because \(M\le n\log n\) and
\[
        \log\frac{N e^{-m/N}}{n(1-1/n)^M}=O\left(\frac{\log n}{n}\right).
\]
Combining these estimates proves the claim.
\end{proof}

The next lemma is the only occupancy estimate needed for declumping.  It is stated in exactly the weighted form supplied by Lemma~\ref{lem:two-entry-reduction}.

\begin{lemma}[Last-occurrence occupancy bound]\label{lem:last-occurrence-bound}
Let \(U\) be a uniform onto word of length \(m=M-1\) over an alphabet of size \(N=n-1\), and let \(R_u\) be the number of letters whose last occurrence in \(U\) is at most \(u\).  Assume
\[
        n\le M\le n\log n,
        \qquad
        \frac{\lambda_{n,M}}{(\log M)^2}\longrightarrow\infty,
        \qquad
        \lambda_{n,M}:=n\left(1-\frac1n\right)^M .
\]
Then
\begin{equation}\label{eq:last-occurrence-sum}
\sum_{u=1}^{M-1}
\Ebb\left[
        \frac{R_u}{n}\left(1-\frac1n\right)^{u-1}
        \exp\left\{-(R_u-1)_+\left(1-\frac1{n-1}\right)^{u-1}\right\}
\right]
        =O\left(\frac{\log M}{\lambda_{n,M}}\right).
\end{equation}
In particular the sum is \(o(1)\).
\end{lemma}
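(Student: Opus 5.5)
The plan is to transfer each summand to the split Poisson product law of Lemma~\ref{lem:poly-conditioning-cost}, estimate it there by an independent-sum computation, and pay only a polynomial factor for conditioning. The exponential decay in \(\lambda_{n,M}\) will then absorb that polynomial loss, except in a short initial range of \(u\), which is handled directly.

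\textbf{Step 1 (Poissonization).} For fixed \(u\), the functional
\[
H_u:=\frac{R_u}{n}\Bigl(1-\frac1n\Bigr)^{u-1}\exp\{-(R_u-1)_+q_u'\},
\qquad q_u':=\Bigl(1-\frac1{n-1}\Bigr)^{u-1},
\]
depends only on the split occupancy configuration. Indeed, \(R_u=\#\{c:B_c=0\}\), where \(A_c,B_c\) are the numbers of occurrences of \(c\) in the first \(u\) and in the last \(m-u\) positions. This holds because every letter of an onto word appears, so \(A_c+B_c\ge1\) is automatic.

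Lemma~\ref{lem:poly-conditioning-cost} therefore gives \(\Ebb_{\rm onto}H_u\le m^C\Ebb_*H_u\). Under \(\Pbb_*\) the indicators \(\1\{B_c=0\}\) are iid Bernoulli with parameter
\[
p_u=\frac{e^{-\tau(m-u)/m}(1-e^{-\tau u/m})}{1-e^{-\tau}},
\]
so \(R_u\sim\Bin(N,p_u)\) with mean \(\Lambda_u\). The expectation is then explicit:
\[
\Ebb_*\bigl[R_ue^{-(R_u-1)_+q}\bigr]\le e^{q}\,Np_u\,e^{-q}\bigl(1-p_u(1-e^{-q})\bigr)^{N-1},
\]
where \(q=q_u'\). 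Since \(q\le1\), we have \(1-e^{-q}\ge q/2\), and the bound is at most \(C\Lambda_u\exp\{-\tfrac12 q_u\Lambda_u+1\}\).

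Lemma~\ref{lem:last-occurrence-scale} gives \(q_u\Lambda_u\asymp\lambda_{n,M}y_u\) with \(y_u=1-e^{-u/n}\). Also \((1-1/n)^{u-1}\Lambda_u/n\asymp\lambda_{n,M}y_u/n\). Hence
\[
\Ebb_*H_u\le C\,\frac{\lambda_{n,M}y_u}{n}\,e^{-c\lambda_{n,M}y_u}.
\]

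\textbf{Step 2 (summing over \(u\)).} Split at \(u_0:=\lceil K n\log M/\lambda_{n,M}\rceil\), with \(K\) a large constant. By the separation hypothesis, \(u_0=o(n)\).

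For \(u\ge u_0\) we have \(y_u\ge c'\min(1,u/n)\), so \(\lambda_{n,M}y_u\ge c'K\log M\). The Poisson bound, multiplied by \(m^C\), is then at most \(M^{C-cc'K}\) per term, so these terms sum to \(O(M^{-1})\) once \(K\) is large.

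For \(u<u_0\) the polynomial loss is unaffordable, so I bound directly on the onto law without Poissonization. Use \(H_u\le R_u/n\) and \(\Ebb_{\rm onto}R_u\le C u\lambda_{n,M}/n\). The second estimate follows because the probability that a given letter has all its occurrences in the first \(u\) positions is at most \(\Pbb(\text{letter absent from the last }m-u\text{ positions}\mid\text{onto})\). That probability is controlled by a ratio of surjection counts, which I expect to be \(\lesssim e^{-(m-u)/N}\cdot u/N\) up to constants. This is a Stirling-number ratio bound that I would verify using Lemma~\ref{lem:occupancy-lower-tail}-type comparisons.

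Summing this bound over \(u<u_0\) gives
\[
\sum_{u<u_0}\frac{C\,u\,\lambda_{n,M}}{n^2}\le C\frac{\lambda_{n,M}u_0^2}{n^2}=O\Bigl(\frac{(\log M)^2}{\lambda_{n,M}}\Bigr).
\]
This is too weak by a factor \(\log M\). To repair it, I would instead use the Poisson estimate without the exponential factor in the range \(n\log M/\lambda_{n,M}\lesssim u<u_0\), and handle only \(u\le n/\lambda_{n,M}\) directly. Alternatively, I would keep the factor \(e^{-(R_u-1)q}\) and exploit the concentration of \(R_u\) on the onto law itself, via the negative association already used in Lemma~\ref{lem:occupancy-lower-tail}. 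This lets the sum \(\sum_u(\lambda y_u/n)e^{-c\lambda y_u}\asymp\lambda^{-1}\) appear, up to the \(\log M\) factor coming from the transition range.

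\textbf{Main obstacle.} The delicate point is the small-\(u\) range, where the \(m^C\) conditioning cost cannot be paid and the target bound \(O(\log M/\lambda_{n,M})\) must come from a direct bound on the onto law. I would first try to show \(\Ebb_{\rm onto}[R_ue^{-(R_u-1)q}]\le C\,\Ebb_*[\cdots]\) for small \(u\) by a direct surjection-ratio computation: conditioning on the first \(u\) letters alters only \(O(u)\) colors.
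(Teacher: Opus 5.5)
Your treatment of the range $u\ge u_0$ is sound and matches the paper. Your cut at $u_0\asymp n\log M/\lambda_{n,M}$ is the paper's split according to whether $\eta_u=q_u\Lambda_u\asymp\lambda_{n,M}y_u$ exceeds $A\log M$, and in that range the binomial Laplace decay absorbs the $m^C$ cost of Lemma~\ref{lem:poly-conditioning-cost}.

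\textbf{The gap is the range $u<u_0$, which your proposal leaves open.}
\begin{enumerate}[label=(\roman*)]
\item Your direct route discards the factor $\exp\{-(R_u-1)_+q_u\}$.
\item It rests on the unproved estimate $\Ebb_{\rm onto}R_u\le Cu\lambda_{n,M}/n$. That estimate needs a lower bound on a surjection count in a denominator, whereas Lemma~\ref{lem:occupancy-lower-tail} only bounds onto probabilities from above.
\item Even if that estimate is granted, the route gives only $O((\log M)^2/\lambda_{n,M})$, as you note yourself.
\item Your first repair does not work. For $n\log M/\lambda_{n,M}\lesssim u<u_0$, the Poisson bound still carries the factor $m^C$. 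Since $\lambda_{n,M}y_u$ is only a small multiple of $\log M$ at the low end of this range, $e^{-c\lambda_{n,M}y_u}$ does not absorb it. Dropping the exponential factor altogether makes this worse.
\item Your second repair, concentration of $R_u$ under the onto law, is not carried out.
\end{enumerate}

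\textbf{The missing observation is that small $u$ needs no distributional information at all.} For $0<q\le1$ and every integer $r\ge0$,
\[
q\,r\,e^{-q(r-1)_+}\le e^{q-1}\le 1 .
\]
Also, for $u\le M\le n\log n$ one has $(1-1/n)^{u-1}\le Cq_u$ with $q_u=(1-1/(n-1))^{u-1}$. Hence every summand of \eqref{eq:last-occurrence-sum} is at most $C/n$ pointwise, and
\[
\sum_{u<u_0}\frac{C}{n}\le \frac{Cu_0}{n}=O\!\left(\frac{\log M}{\lambda_{n,M}}\right).
\]
This is exactly where the $\log M$ in the statement comes from. With this replacement your argument becomes the paper's.

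You also need to treat separately the cases excluded by the hypotheses of Lemma~\ref{lem:poly-conditioning-cost}:
\begin{itemize}
\item The endpoint $M=n$. Here $m=N$ and there is no $\tau>0$, but $U$ is a permutation with $R_u=u$, and a direct computation gives $O(1/n)$.
\item The term $u=m$. Here $R_m=N$ deterministically, so the term is $O(e^{-c\lambda_{n,M}})$.
\end{itemize}
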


\begin{proof}
Set \(\lambda=\lambda_{n,M}\) for short.  We first remove the degenerate endpoint.  If \(M=n\), then \(m=N\), and the uniform onto word \(U\) is a permutation of the \(N\) letters.  Hence \(R_u=u\) for \(1\le u\le N\).  Since \(q_u:=(1-1/(n-1))^{u-1}\ge e^{-2}\) throughout this range, the summand is bounded by
\[
        \frac{Cu}{n}\exp\{-cu\}.
\]
Summing over \(u\ge1\) gives \(O(n^{-1})\), which is
\(O((\log M)/\lambda)\) because \(\lambda\asymp n\) when \(M=n\).

Assume now that \(M>n\), so that \(m>N\).  Put
\[
        q_u:=\left(1-\frac1{n-1}\right)^{u-1},
        \qquad
        y_u:=1-e^{-u/n}.
\]
The endpoint \(u=m\) is harmless and will be separated off.  In this case
\(R_m=N\) deterministically, so the corresponding summand is at most
\[
        C\left(1-\frac1n\right)^M
        \exp\left\{-cN\left(1-\frac1N\right)^m\right\}
        \le C\exp\{-c'\lambda_{n,M}\},
\]
which is \(o(\lambda_{n,M}^{-1})\) under
\(\lambda_{n,M}/(\log M)^2\to\infty\).  We may therefore restrict the
Poissonization argument below to \(1\le u\le m-1\).

We use a Poisson comparison only to obtain binomial Laplace estimates, and keep the cost of returning to the fixed-length onto model explicit.

Fix \(1\le u\le m-1\).  Let \(\tau=\tau_{N,m}>0\) be the solution of
\[
        \frac{N\tau}{1-e^{-\tau}}=m.
\]
For each letter \(c\), let \(A_c\) and \(B_c\) be independent Poisson variables with means \(\tau u/m\) and \(\tau(m-u)/m\), respectively, conditioned on
\[
        A_c+B_c\ge1\qquad(1\le c\le N).
\]
In this product model the events \(\{B_c=0\}\) are independent.  If
\[
        R_u^*:=\sum_{c=1}^N\1_{\{B_c=0\}},
        \qquad
        \Lambda_u:=\Ebb_*R_u^*,
\]
then, by Lemma~\ref{lem:last-occurrence-scale},
\begin{equation}\label{eq:lambda-u-comparison}
        q_u\Lambda_u\asymp \lambda y_u
\end{equation}
uniformly for \(1\le u\le m-1\).

The fixed-length uniform onto word is obtained from this product model by additionally conditioning on
\[
        \mathcal C_u=\left\{
        \sum_{c=1}^N A_c=u,
        \quad
        \sum_{c=1}^N(A_c+B_c)=m
        \right\}.
\]
By Lemma~\ref{lem:poly-conditioning-cost}, for every nonnegative functional \(H\) of the split occupancy configuration,
\begin{equation}\label{eq:poly-conditioning-cost}
        \Ebb_{\mathrm{onto}}H\le C M^C\Ebb_*H .
\end{equation}
This is the only de-Poissonization input used below.  The polynomial factor will be applied only in a range where it is absorbed by an exponential Laplace penalty.

Let
\[
        \eta_u:=q_u\Lambda_u.
\]
We split the sum according to the size of \(\eta_u\).  Choose a constant \(A\) large enough later.

First consider the indices for which \(\eta_u\le A\log M\).  For every integer \(r\ge0\),
\[
        q_u r\exp\{ -q_u(r-1)_+\}\le C.
\]
Also
\[
        \left(1-\frac1n\right)^{u-1}\le Cq_u
        \qquad (1\le u\le m-1),
\]
because \(M\le n\log n\).  Therefore the corresponding part of the sum in \eqref{eq:last-occurrence-sum} is at most
\[
        \frac{C}{n}\#\{u:\eta_u\le A\log M\}.
\]
By \eqref{eq:lambda-u-comparison}, \(\eta_u\asymp\lambda y_u\).  Since \(y_u=1-e^{-u/n}\asymp u/n\) for \(u\le n\) and is bounded below by an absolute constant for \(u>n\), the number of such indices is at most
\[
        C n\frac{\log M}{\lambda}.
\]
Hence the small-\(\eta_u\) contribution is
\begin{equation}\label{eq:small-eta-contribution}
        O\left(\frac{\log M}{\lambda}\right).
\end{equation}

It remains to treat the indices with \(\eta_u>A\log M\).  Since \(R_u^*\) is binomial with mean \(\Lambda_u\), the binomial Laplace transform gives
\begin{equation}\label{eq:binomial-laplace}
\begin{aligned}
        \Ebb_*\left[q_uR_u^*e^{-q_u(R_u^*-1)_+}\right]
        &\le C\Ebb_*\left[q_uR_u^*e^{-q_uR_u^*/2}\right]  \\
        &\le C\eta_u e^{-c\eta_u}.
\end{aligned}
\end{equation}
For example, if \(R_u^*\sim\Bin(N,p_u)\), then
\[
        \Ebb_*\left[q_uR_u^*e^{-q_uR_u^*/2}\right]
        =q_u N p_u e^{-q_u/2}(1-p_u+p_ue^{-q_u/2})^{N-1}
        \le \eta_u e^{-c q_u N p_u}
        =\eta_u e^{-c\eta_u}.
\]
Combining \eqref{eq:poly-conditioning-cost} and \eqref{eq:binomial-laplace}, and choosing \(A\) sufficiently large, gives
\begin{equation}\label{eq:large-eta-bound}
        \Ebb_{\mathrm{onto}}\left[q_uR_ue^{-q_u(R_u-1)_+}\right]
        \le e^{-c'\eta_u}
        \qquad(\eta_u>A\log M).
\end{equation}
Indeed, the factor \(M^C\eta_u\) is absorbed into \(e^{-c\eta_u/2}\) throughout this range.

Using again \((1-1/n)^{u-1}\le Cq_u\), the large-\(\eta_u\) contribution to \eqref{eq:last-occurrence-sum} is at most
\[
        \frac{C}{n}\sum_{u=1}^{m-1}e^{-c'\eta_u}.
\]
By \eqref{eq:lambda-u-comparison}, \(\eta_u\ge c\lambda y_u\).  For \(u\le n\), \(y_u\ge cu/n\), so
\[
        \frac1n\sum_{u=1}^{n}e^{-c'\eta_u}
        \le
        \frac1n\sum_{u=1}^{n}\exp\left\{-c''\lambda\frac{u}{n}\right\}
        =O\left(\frac1{\lambda}\right).
\]
For \(u>n\), \(y_u\ge c\), and hence
\[
        \frac1n\sum_{u=n+1}^{M-1}e^{-c'\eta_u}
        \le C\log n\,e^{-c''\lambda}
        =o\left(\frac1{\lambda}\right),
\]
because \(\lambda/(\log M)^2\to\infty\).  Thus the large-\(\eta_u\) contribution is
\begin{equation}\label{eq:large-eta-contribution}
        O\left(\frac1{\lambda}\right).
\end{equation}
Combining \eqref{eq:small-eta-contribution} and \eqref{eq:large-eta-contribution} proves \eqref{eq:last-occurrence-sum}.
\end{proof}

\begin{lemma}[Subcritical short-return estimate]\label{lem:short-return}
Let \(M=M_n\) satisfy
\[
        n\le M\le n\log n,
        \qquad
        \frac{\lambda_{n,M}}{(\log M)^2}\to\infty.
\]
Then
\[
        \theta_{n,M}:=\sum_{u=1}^{M}\Pbb(E_u=1\mid E_0=1)\to0.
\]
More quantitatively,
\[
        \theta_{n,M}=O\left(\frac{\log M}{\lambda_{n,M}}\right)=o(1).
\]
\end{lemma}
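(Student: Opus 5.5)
The plan is to split \(\theta_{n,M}\) into the terms \(1\le u\le m=M-1\) and the endpoint \(u=M\), and to control each with results already in hand.

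For the endpoint, \eqref{eq:endpoint-pair-bound} in Lemma~\ref{lem:two-entry-reduction} gives exactly \(\Pbb(E_M=1\mid E_0=1)=\mu_{n,M}\). To show this is \(O(1/\lambda_{n,M})\), I will use the flux formula of Theorem~\ref{thm:flux} in the form \((1-1/n)^M\) times the onto probability for \((n-1,M-1)\). By definition, \((1-1/n)^M=\lambda_{n,M}/n\). The onto probability is at most \(\exp\{-c\lambda_{n,M}\}\) by Lemma~\ref{lem:occupancy-lower-tail}, using its comparison of the \((n-1,M-1)\) and \((n,M)\) missing-color means. Hence \(\mu_{n,M}\le (\lambda_{n,M}/n)e^{-c\lambda_{n,M}}=o(1/\lambda_{n,M})\).

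For \(1\le u\le m\), I condition on the middle block \(U\) at the entry time and take expectations in \eqref{eq:entry-pair-bound}:
\[
        \Pbb(E_u=1\mid E_0=1)=\Ebb\bigl[\Pbb(E_u=1\mid U,E_0)\mid E_0=1\bigr].
\]
By Lemma~\ref{lem:two-entry-reduction}, under \(E_0=1\) the block \(U\) is a uniform onto word of length \(m\) over the \(N=n-1\) letters \([n]\setminus\{a\}\). The count \(R_u\) there is exactly the number of letters whose last occurrence in \(U\) is at most \(u\). The conditional law of \(U\) does not depend on the appended color \(a\) up to relabeling, so the expectation is the same as for a fixed alphabet of size \(N\). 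Summing the resulting bound over \(u=1,\dots,m\) gives precisely the left side of \eqref{eq:last-occurrence-sum}. Lemma~\ref{lem:last-occurrence-bound} applies under the present hypotheses \(n\le M\le n\log n\) and \(\lambda_{n,M}/(\log M)^2\to\infty\), and bounds this sum by \(O(\log M/\lambda_{n,M})\).

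Adding the two parts gives \(\theta_{n,M}=O(\log M/\lambda_{n,M})+o(1/\lambda_{n,M})\). Since \(\log M/\lambda_{n,M}\le 1/\log M\) eventually under the separation hypothesis, this is \(o(1)\).

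The substantive work is already in Lemma~\ref{lem:last-occurrence-bound}, so I do not expect a real obstacle. The only points to check with care are the bookkeeping ones: that conditioning on \(E_0=1\) together with the value of \(a\) leaves \(U\) uniform onto, which follows from the exact characterization in the proof of Theorem~\ref{thm:flux}; and that the index ranges match, with the \(u=m\) term already included in Lemma~\ref{lem:last-occurrence-bound} and \(u=M\) handled separately.
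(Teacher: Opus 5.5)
Your proposal is correct and follows the paper's proof essentially step for step. You split off the endpoint $u=M$, where \eqref{eq:endpoint-pair-bound} together with Theorem~\ref{thm:flux} and Lemma~\ref{lem:occupancy-lower-tail} gives an exponentially small contribution $\mu_{n,M}$. For $1\le u\le M-1$ you average \eqref{eq:entry-pair-bound} over the uniform onto middle block and invoke Lemma~\ref{lem:last-occurrence-bound}; your slightly sharper endpoint bound $(\lambda_{n,M}/n)e^{-c\lambda_{n,M}}$ and the explicit relabeling remark are harmless refinements of the same argument.
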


\begin{proof}
For \(1\le u\le M-1\), Lemma~\ref{lem:two-entry-reduction} bounds \(\Pbb(E_u=1\mid U,E_0)\) by the weighted last-occurrence expression appearing in Lemma~\ref{lem:last-occurrence-bound}.  Taking expectation over the conditional middle block \(U\) and summing over \(u\le M-1\) gives
\[
        \sum_{u=1}^{M-1}\Pbb(E_u=1\mid E_0=1)
        =O\left(\frac{\log M}{\lambda_{n,M}}\right).
\]
The endpoint \(u=M\) contributes \(O(\mu_{n,M})\) by \eqref{eq:endpoint-pair-bound}.  By Theorem~\ref{thm:flux} and Lemma~\ref{lem:occupancy-lower-tail},
\[
        \mu_{n,M}
        =
        \left(1-\frac1n\right)^M
        \frac{(n-1)!S(M-1,n-1)}{(n-1)^{M-1}}
        \le \exp\{-c\lambda_{n,M}\}
\]
for some absolute \(c>0\), in the present range.  This endpoint contribution
is absorbed by the preceding \(O((\log M)/\lambda_{n,M})\) term.  The asserted
estimate follows.
\end{proof}

\begin{remark}[Mechanism of the declumping estimate]
The proof does not rely on an informal renewal picture.  A second entry is reduced to the following finite combinatorial requirement: among the colors whose old last occurrences have already shifted out, one color must be selected as the new missing color, all the others must be repaired by fresh draws, and the selected missing color must be avoided until the final repairing draw.  The weighted last-occurrence estimate now keeps the fixed-length conditioning cost explicit and absorbs it before summing over overlaps.  The price is the mild quantitative separation \(\lambda_{n,M}\gg(\log M)^2\), which covers all fixed \(M=\floor{\alpha n\log n}\), \(0<\alpha<1\), and the endpoint \(M=n\).
\end{remark}

\section{Subcritical rare-window law}

We now combine the exact flux and the finite-dependence theorem.

\begin{theorem}[Subcritical expiring coupon collector]\label{thm:subcritical}
Let $M=M_n$ satisfy
\[
        n\le M\le n\log n,
        \qquad
        \frac{\lambda_{n,M}}{(\log M)^2}\to\infty,
        \qquad
        \lambda_{n,M}:=n\left(1-\frac1n\right)^M.
\]
Let $T_{n,M}$ be the first time that the expiring coupon collector has all $n$ types simultaneously active.  Then
\[
        \mu_{n,M}T_{n,M}\Rightarrow\Exp(1),
\]
where
\[
        \mu_{n,M}
        =\frac{(n-1)(n-1)!S(M-1,n-1)}{n^M}.
\]
Moreover, for every fixed $r\ge1$,
\[
        \Ebb(\mu_{n,M}T_{n,M})^r\to r!,
\]
and in particular
\[
        \Ebb T_{n,M}\sim\frac1{\mu_{n,M}},
        \qquad
        \Var(T_{n,M})\sim\frac1{\mu_{n,M}^2}.
\]
\end{theorem}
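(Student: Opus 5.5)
The plan is to verify the hypotheses of Theorem~\ref{thm:finite-dependence}, both for the distributional limit and for the moment part, and then read off the conclusions.

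\textbf{Step 1: first four hypotheses.} Under $n\le M\le n\log n$ we have $\lambda_{n,M}\ge n e^{-M/(n-1)}\ge c$. The separation $\lambda_{n,M}/(\log M)^2\to\infty$ then forces $\lambda_{n,M}\to\infty$. Lemma~\ref{lem:occupancy-lower-tail} with $(N,m)=(n,M)$ gives $\pi_{n,M}\le e^{-\lambda_{n,M}}\to0$. Applied to the $(n-1,M-1)$ occupancy, together with the comparison of missing-colour means in the same lemma, it gives
\[
\mu_{n,M}\le \exp\{-c\lambda_{n,M}\}.
\]
For $M\mu_{n,M}\to0$ I need $\log M=o(\lambda_{n,M})$, and this is supplied by the separation condition, since $(\log M)^2=o(\lambda_{n,M})$. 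Finally $\theta_{n,M}=O((\log M)/\lambda_{n,M})\to0$ by Lemma~\ref{lem:short-return}. Theorem~\ref{thm:finite-dependence} then yields $\mu_{n,M}T_{n,M}\Rightarrow\Exp(1)$.

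\textbf{Step 2: the moment hypothesis.} I need integers $b=b_n$ with $M=o(b)$, $b\mu_{n,M}\to0$, and the uniform minorization
\[
\inf_w\Pbb_w(H_A\le b+M+1)\ge cb\mu_{n,M}.
\]
I will take $b=\lfloor M^{1/2}\mu_{n,M}^{-1/2}\rfloor$, as in the proof of Theorem~\ref{thm:finite-dependence}. Then $b/M=(M\mu_{n,M})^{-1/2}\to\infty$ and $b\mu_{n,M}=(M\mu_{n,M})^{1/2}\to0$.

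The one-block estimate
\[
\Pbb_{\rm stat}(N_b\ge1)=b\mu_{n,M}(1+o(1))
\]
was established inside the proof of Theorem~\ref{thm:finite-dependence} for any such $b$. It follows from the second-factorial-moment bound
\[
\Ebb[(N_b)_2]\le 2b\mu_{n,M}\theta_{n,M}+2b^2\mu_{n,M}^2=o(b\mu_{n,M}).
\]
Feeding this into Lemma~\ref{lem:fresh-block-minorization} gives the minorization with $c=1/2$ for all large $n$. The moment part of Theorem~\ref{thm:finite-dependence} then gives $\Ebb(\mu_{n,M}T_{n,M})^r\to r!$. Taking $r=1,2$ gives $\Ebb T_{n,M}\sim1/\mu_{n,M}$ and $\Ebb T_{n,M}^2\sim 2/\mu_{n,M}^2$, hence $\Var(T_{n,M})\sim 1/\mu_{n,M}^2$.

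\textbf{Main obstacle.} Essentially everything is already in place, so the only delicate point is bookkeeping in Step 1. The exponential bound for $\mu_{n,M}$ from Lemma~\ref{lem:occupancy-lower-tail} requires $\Lambda_{N,m}\to\infty$ in the $(n-1,M-1)$ variables, and this follows from the $\asymp$ comparison in that lemma. The endpoint $M=n$ needs to be checked separately. There $\mu_{n,n}=(n-1)(n-1)!/n^n$, which is super-exponentially small, and $\lambda_{n,n}\asymp n$, so all four conditions hold directly.
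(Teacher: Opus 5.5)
Your proposal is correct and follows the paper's proof essentially step for step. You verify $\pi_{n,M},\mu_{n,M},M\mu_{n,M},\theta_{n,M}\to0$ via Lemma~\ref{lem:occupancy-lower-tail} and Lemma~\ref{lem:short-return}, and you obtain the moment hypothesis of Theorem~\ref{thm:finite-dependence} by feeding the one-block estimate with $b=\lfloor M^{1/2}\mu_{n,M}^{-1/2}\rfloor$ into Lemma~\ref{lem:fresh-block-minorization}. Two small corrections: $\mu_{n,n}\sim\sqrt{2\pi n}\,e^{-n}$ is only exponentially, not super-exponentially, small, and no separate endpoint check is needed because Lemma~\ref{lem:occupancy-lower-tail} already allows $m=N$.
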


\begin{proof}
By Proposition~\ref{prop:mass},
\[
        \pi_{n,M}=\Pbb(W_t\in A_{n,M}).
\]
Under the theorem's hypotheses,
\[
        \lambda_{n,M}=n\left(1-\frac1n\right)^M\to\infty.
\]
Lemma~\ref{lem:occupancy-lower-tail} gives, uniformly in this range,
\[
        \pi_{n,M}\le \exp\{-\lambda_{n,M}\}\to0.
\]
The same lemma, applied to the exact flux formula of
Theorem~\ref{thm:flux}, gives
\[
        \mu_{n,M}
        \le
        \left(1-\frac1n\right)^M
        \exp\{-c\lambda_{n,M}\}
        \le \exp\{-c'\lambda_{n,M}\}.
\]
Since \(\lambda_{n,M}\gg(\log M)^2\), this implies
\[
        \mu_{n,M}\to0,
        \qquad
        M\mu_{n,M}\to0.
\]
Lemma~\ref{lem:short-return} gives \(\theta_{n,M}\to0\).  Therefore
Theorem~\ref{thm:finite-dependence} applies.

For the moment statement, choose the block length \(b\) from the proof of
Theorem~\ref{thm:finite-dependence}.  The one-block estimate proved there,
combined with Lemma~\ref{lem:fresh-block-minorization}, supplies the required
uniform lower bound.  Hence the moment part of
Theorem~\ref{thm:finite-dependence} applies.
\end{proof}

\section{Fixed-\texorpdfstring{$\alpha$}{alpha} scale}

The regime emphasized in \cite{MSEAnswer} is
\[
        M=\floor{\alpha n\log n},
        \qquad 0<\alpha<1.
\]
The exact flux gives the canonical finite-\(n\) normalization.  Its elementary leading logarithmic form is as follows.

\begin{lemma}[Fixed-$\alpha$ flux scale]\label{lem:fixed-alpha-flux}
If \(M=\floor{\alpha n\log n}\) with fixed \(0<\alpha<1\), then
\[
        \log \mu_{n,M}=-n^{1-\alpha}+o(n^{1-\alpha}).
\]
If, moreover, \(\alpha>1/2\), then the sharper ratio asymptotic holds:
\[
        \mu_{n,M}\sim n^{-\alpha}\exp(-n^{1-\alpha}).
\]
\end{lemma}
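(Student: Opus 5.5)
We use the second form of Theorem~\ref{thm:flux},
\[
        \mu_{n,M}=\left(1-\frac1n\right)^M P_{N,m},
        \qquad
        P_{N,m}:=\frac{N!S(m,N)}{N^m},\quad N=n-1,\ m=M-1,
\]
and estimate the two factors separately. The first factor is elementary. Since \(M=\alpha n\log n+O(1)\), we have
\[
        M\log(1-1/n)=-M/n+O(M/n^2)=-\alpha\log n+O((\log n)/n),
\]
so \((1-1/n)^M\sim n^{-\alpha}\). It contributes only \(O(\log n)=o(n^{1-\alpha})\) to \(\log\mu_{n,M}\). The whole problem therefore reduces to the onto probability \(P_{N,m}\). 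By inclusion--exclusion (Proposition~\ref{prop:mass} with \((N,m)\)),
\[
        P_{N,m}=\sum_{j=0}^{N}(-1)^j\binom Nj\left(1-\frac jN\right)^m .
\]
The natural parameter is \(\Lambda:=N(1-1/N)^m\). By the same Taylor computation, \(\Lambda=n^{1-\alpha}(1+O((\log n)/n))\), and since \(n^{1-\alpha}(\log n)/n\to0\) this gives \(\Lambda=n^{1-\alpha}+o(1)\).

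\textbf{Upper bound.} Lemma~\ref{lem:occupancy-lower-tail} gives \(P_{N,m}\le e^{-\Lambda}\) directly. Hence \(\log\mu_{n,M}\le -n^{1-\alpha}+O(\log n)\).

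\textbf{Lower bound.} We need \(P_{N,m}\ge e^{-\Lambda(1+o(1))}\), and for \(\alpha>1/2\) the sharper \(P_{N,m}\sim e^{-\Lambda}\). For the sharp statement we compare the inclusion--exclusion terms with the Poisson series \(\sum_j(-1)^j\Lambda^j/j!=e^{-\Lambda}\). Writing
\[
        \binom Nj(1-j/N)^m=\frac{\Lambda^j}{j!}\,\varepsilon_j,
\]
the correction \(\log\varepsilon_j\) is of order \(-j^2/(2N)-mj^2/(2N^2)+\cdots\), that is, \(O(j^2\log n/n)\). For this to be \(o(1)\) we need \(j\ll\sqrt{n/\log n}\). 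The relevant range of \(j\) is \(j\lesssim\Lambda=n^{1-\alpha}\), which lies inside \(\sqrt{n/\log n}\) exactly when \(\alpha>1/2\).

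The difficulty is that the series alternates with terms far larger than \(e^{-\Lambda}\), so termwise approximation cannot be used naively. I would therefore avoid cancellation altogether and use a probabilistic representation instead. Use the Poisson/conditioning identity from Lemma~\ref{lem:poly-conditioning-cost} (with \(u\) irrelevant): in the zero-truncated Poisson product model,
\[
        P_{N,m}=\Pbb(\text{all }N\text{ cells occupied})=\frac{(1-e^{-t})^N\,\Pbb(\text{truncated sum}=m)}{\Pbb(\text{Poisson}(Nt)=m)}
\]
for any \(t\). Take \(t=\tau\) as defined there. The local limit statements (Lemma~\ref{lem:uniform-ztp-llt}) make the ratio of the two point probabilities \(\asymp\) a bounded factor, in fact \(\to\) an explicit constant. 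This yields
\[
        P_{N,m}=\exp\{N\log(1-e^{-\tau})+m\log(Nt/m)\ \text{-type terms}\}\cdot\Theta(1),
\]
so \(\log P_{N,m}=-\Lambda+o(\Lambda)\) after a Taylor expansion of the saddle \(\tau=m/N-O(e^{-m/N}m/N)\). This gives the logarithmic statement for all \(0<\alpha<1\).

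For \(\alpha>1/2\) we then need the exponent to equal \(-\Lambda+o(1)\) and the constant prefactor to tend to \(1\). Expanding the saddle, the exponent is \(-Ne^{-\rho}+O(N\rho e^{-2\rho})\) with \(\rho=m/N\). The error term is \(O(n^{1-2\alpha}\log n)\to0\) precisely when \(\alpha>1/2\). Moreover \(Ne^{-\rho}\) differs from \(\Lambda\) by \(O(m/N^2\cdot\Lambda)=o(1)\). The variance ratio \(\sigma_\tau^2/\tau\to1\) because \(\tau\to\infty\), so the LLT prefactor \(\sqrt{m}/\sqrt{2\pi N\sigma_\tau^2}\cdot\sqrt{2\pi m}/\ldots\) tends to \(1\).

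The main obstacle is this last bookkeeping. All \(O(\cdot)\) terms in the saddle expansion, including the gap between \(\Lambda\) and \(n^{1-\alpha}\) and the floor in \(M\), must be \(o(1)\) rather than merely \(o(n^{1-\alpha})\). The threshold \(\alpha>1/2\) should appear exactly from the second-order term \(N e^{-2\rho}\rho\asymp n^{1-2\alpha}\log n\). Finally, combine the two factors: \(\mu_{n,M}\sim n^{-\alpha}e^{-\Lambda}\), and \(e^{-\Lambda}\sim e^{-n^{1-\alpha}}\) since \(\Lambda-n^{1-\alpha}\to0\).
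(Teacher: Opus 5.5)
Your argument is correct, and it takes a genuinely different route from the paper's. The paper also factors $\mu_{n,M}=(1-1/n)^M\rho_{n-1,M-1}$, but it then handles the onto probability through inclusion--exclusion:
\begin{itemize}
\item for the logarithmic scale, it cites the Erd\H{o}s--R\'enyi lower tail and sketches a Bonferroni truncation at orders $(1\pm\varepsilon)\Lambda_N$;
\item for $\alpha>1/2$, it bounds the connected $r$-point cluster corrections by $O_r(\Lambda_N^r\delta_N^{r-1})$, with $\delta_N=m/N^2$.
\end{itemize}
You instead use three ingredients: the exact identity from conditioning independent Poisson counts on their total, evaluated at the saddle $\tau$; the paper's own local estimate, Lemma~\ref{lem:uniform-ztp-llt}; and Stirling's formula for the Poisson point mass. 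Lemma~\ref{lem:uniform-ztp-llt} applies because $\tau\le\rho\sim\alpha\log N$ and $B\asymp N\log N\to\infty$.

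What your route buys is that it sidesteps the cancellation problem you correctly identified. The inclusion--exclusion terms reach size roughly $e^{\Lambda}/\sqrt{\Lambda}$ while the answer is $e^{-\Lambda}$. Any truncation-based argument therefore has to control cancellations among terms far larger than the answer, and the paper's brief sketch does not spell this out. Your formula is two-sided with no cancellation, which makes the upper bound from Lemma~\ref{lem:occupancy-lower-tail} redundant. It also shows the $\alpha>1/2$ threshold transparently as the $me^{-2\tau}$ term. The paper's route is shorter and leans on classical results. Yours is self-contained given the appendix lemma, which the paper already needs for declumping.

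Two bookkeeping corrections for the write-up.
\begin{itemize}
\item The Poisson$(N\tau)$ mass at $m$ is not of order $m^{-1/2}$ when $\alpha<1/2$, because $m-N\tau=me^{-\tau}\gg\sqrt m$ there. So keep it exactly via Stirling rather than calling the ratio of point probabilities bounded.
\item The term $m\log(N\tau/m)=m\log(1-e^{-\tau})$ enters $\log P_{N,m}$ with a minus sign, so the two $\pm me^{-\tau}$ contributions cancel.
\end{itemize}
Using $N\tau/m=1-e^{-\tau}$ and $\sigma_\tau^2=\rho(1-\rho e^{-\tau})$, one gets exactly
\[
\log P_{N,m}=(N-m)\log(1-e^{-\tau})-me^{-\tau}+\tfrac12\log\frac{\rho}{\sigma_\tau^2}+o(1)
=-Ne^{-\tau}+O(me^{-2\tau})+o(1).
\]
Together with $me^{-2\tau}\le e^2me^{-2\rho}\asymp N^{1-2\alpha}\log N$ and your comparisons of $Ne^{-\tau}$ with $Ne^{-\rho}$ and with $\Lambda$, this gives both the logarithmic statement for all $0<\alpha<1$ and the ratio asymptotic for $\alpha>1/2$.
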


\begin{proof}
By Theorem~\ref{thm:flux},
\[
        \mu_{n,M}
        =\left(1-\frac1n\right)^M
          \rho_{n-1,M-1},
\]
where
\[
        \rho_{N,m}:=\frac{N!S(m,N)}{N^m}
\]
is the probability that an iid word of length \(m\) over \(N\) symbols is onto.
We need the lower-tail estimate for
\[
        m=\alpha N\log N+O(\log N),
        \qquad 0<\alpha<1.
\]
Let \(Z_N\) be the number of missing symbols.  Then
\[
        \Lambda_N:=\Ebb Z_N
        =N\left(1-\frac1N\right)^m
        =N^{1-\alpha}(1+o(1)).
\]
The classical Erd\H{o}s--R\'enyi coupon-collector lower-tail theorem
\cite{ErdosRenyi,Feller} gives
\[
        \log \rho_{N,m}=-\Lambda_N+o(\Lambda_N)
        =-N^{1-\alpha}+o(N^{1-\alpha}).
\]
For completeness, we recall how this follows from inclusion-exclusion.  Since
\[
        \rho_{N,m}
        =\sum_{k=0}^{N}(-1)^k\binom Nk\left(1-\frac{k}{N}\right)^m,
\]
the \(k\)-th term in the relevant range satisfies
\[
        \binom Nk\left(1-\frac{k}{N}\right)^m
        =\frac{\Lambda_N^k}{k!}
          \exp\left\{O\left(\frac{k^2}{N}+\frac{m k^2}{N^2}\right)\right\}
\]
uniformly for \(k=O(\Lambda_N)\).  Because
\(\Lambda_N^2/N+m\,\Lambda_N^2/N^2=o(\Lambda_N)\), Bonferroni truncation at
orders \((1\pm\varepsilon)\Lambda_N\), followed by \(\varepsilon\downarrow0\),
gives the displayed logarithmic estimate.  This is the standard
inclusion-exclusion proof of the coupon-collector lower tail.

Since
\[
        \log\left(1-\frac1n\right)^M=-\alpha\log n+o(1)=o(n^{1-\alpha}),
\]
the logarithmic estimate for \(\mu_{n,M}\) follows.

For \(\alpha>1/2\), the same inclusion-exclusion expansion is accurate on the
multiplicative scale.  Indeed, the first cluster correction is
\[
        \binom N2\left[
        \left(1-\frac2N\right)^m
        -
        \left(1-\frac1N\right)^{2m}\right]
        =O\left(N^{1-2\alpha}\log N\right)=o(1).
\]
More generally, writing \(\delta_N:=m/N^2=O(\log N/N)\), the connected
\(r\)-point cluster correction in the inclusion-exclusion logarithm is
\[
        O_r\left(\Lambda_N^r\delta_N^{r-1}\right)
        =O_r\left(N^{1-r\alpha}(\log N)^{r-1}\right)=o(1)
\]
for each fixed \(r\ge2\).  The Bonferroni remainder is then controlled by
letting the truncation order tend slowly to infinity, so
\(\log\rho_{N,m}=-\Lambda_N+o(1)\).  Hence
\[
        \rho_{N,m}=\exp(-\Lambda_N)(1+o(1))
        =\exp(-N^{1-\alpha})(1+o(1)).
\]
Together with \((1-1/n)^M\sim n^{-\alpha}\), this gives the ratio asymptotic.
\end{proof}

\begin{corollary}[Fixed-$\alpha$ expiring collector]\label{cor:fixed-alpha}
Let \(0<\alpha<1\) be fixed and put \(M=\floor{\alpha n\log n}\).  For all sufficiently large \(n\),
\[
        \mu_{n,M}T_{n,M}\Rightarrow\Exp(1),
\]
where
\[
        \mu_{n,M}=\frac{(n-1)(n-1)!S(M-1,n-1)}{n^M}.
\]
Consequently, for every fixed \(r\ge1\),
\[
        \Ebb(\mu_{n,M}T_{n,M})^r\to r!,
\]
and
\[
        \log T_{n,M}=n^{1-\alpha}+o_{\mathbb P}(n^{1-\alpha}),
        \qquad
        \log \Ebb T_{n,M}=n^{1-\alpha}+o(n^{1-\alpha}).
\]
If \(\alpha>1/2\), then the sharper normalization is
\[
        n^{-\alpha}e^{-n^{1-\alpha}}T_{n,M}\Rightarrow\Exp(1),
\]
and in particular
\[
        \Ebb T_{n,M}\sim n^\alpha e^{n^{1-\alpha}},
        \qquad
        \Var(T_{n,M})\sim n^{2\alpha}e^{2n^{1-\alpha}}.
\]
\end{corollary}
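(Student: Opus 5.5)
The plan is to deduce the corollary from Theorem~\ref{thm:subcritical} together with Lemma~\ref{lem:fixed-alpha-flux}.

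\textbf{Step 1: hypotheses of Theorem~\ref{thm:subcritical}.} For $M=\floor{\alpha n\log n}$ we have $n\le M\le n\log n$ for large $n$. Also
\[
\log\lambda_{n,M}=\log n+M\log(1-1/n)=(1-\alpha)\log n+O(\log n/n),
\]
so $\lambda_{n,M}\sim n^{1-\alpha}$. Since $(\log M)^2=O((\log n)^2)$, the separation condition $\lambda_{n,M}/(\log M)^2\to\infty$ holds. Theorem~\ref{thm:subcritical} then gives directly $\mu_{n,M}T_{n,M}\Rightarrow\Exp(1)$, the moment convergence $\Ebb(\mu_{n,M}T_{n,M})^r\to r!$, and $\Ebb T_{n,M}\sim1/\mu_{n,M}$, $\Var(T_{n,M})\sim\mu_{n,M}^{-2}$.

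\textbf{Step 2: logarithmic scale.} Write
\[
\log T_{n,M}=-\log\mu_{n,M}+\log(\mu_{n,M}T_{n,M}).
\]
The second term is $O_{\mathbb P}(1)$, because its argument converges in law to $\Exp(1)$, which is a.s.\ positive and finite. In particular, for each $\varepsilon>0$ the probabilities $\Pbb(\mu T<\varepsilon)$ and $\Pbb(\mu T>1/\varepsilon)$ are eventually small. Lemma~\ref{lem:fixed-alpha-flux} gives $-\log\mu_{n,M}=n^{1-\alpha}+o(n^{1-\alpha})$, so
\[
\log T_{n,M}=n^{1-\alpha}+o_{\mathbb P}(n^{1-\alpha}).
\]
Likewise $\log\Ebb T_{n,M}=-\log\mu_{n,M}+o(1)=n^{1-\alpha}+o(n^{1-\alpha})$.

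\textbf{Step 3: the case $\alpha>1/2$.} Lemma~\ref{lem:fixed-alpha-flux} gives $\mu_{n,M}/(n^{-\alpha}e^{-n^{1-\alpha}})\to1$. The random variable $n^{-\alpha}e^{-n^{1-\alpha}}T_{n,M}$ equals $\mu_{n,M}T_{n,M}$ times this deterministic ratio, which tends to $1$. By Slutsky it converges to $\Exp(1)$. The mean and variance asymptotics follow by substituting the ratio asymptotic into $\Ebb T\sim\mu^{-1}$ and $\Var T\sim\mu^{-2}$.

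\textbf{Main obstacle.} Essentially all of the analytic work sits in the earlier lemmas, so the only step here needing care is the verification of the separation condition in Step 1. That reduces to the estimate $\lambda_{n,M}\sim n^{1-\alpha}$, where the floor affects $M$ only by $O(1)$ and hence changes $\lambda_{n,M}$ by a factor $1+O(1/n)$. The phrase ``for all sufficiently large $n$'' in the statement is interpreted as applying the limit theorems along the tail of the sequence.
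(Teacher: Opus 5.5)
Your proposal is correct and follows the paper's own route: the paper's proof likewise invokes Theorem~\ref{thm:subcritical} for the exponential law and the moments, and Lemma~\ref{lem:fixed-alpha-flux} for the logarithmic scale and the $\alpha>1/2$ normalization. Your explicit check that $\lambda_{n,M}\sim n^{1-\alpha}\gg(\log M)^2$, the tightness of $\log(\mu_{n,M}T_{n,M})$, and the Slutsky step fill in details that the paper leaves implicit.
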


\begin{proof}
The exact-flux limit and moment convergence are Theorem~\ref{thm:subcritical}.  The logarithmic estimates and, for \(\alpha>1/2\), the sharper normalization follow from Lemma~\ref{lem:fixed-alpha-flux}.
\end{proof}

\begin{remark}[Comparison with the StackExchange answer]
Joriki's Math StackExchange answer predicts the leading scale \(n^\alpha\exp(n^{1-\alpha})\) for \(M=\floor{\alpha n\log n}\), \(0<\alpha<1\), and gives a correction analysis for overlapping windows \cite{MSEAnswer}.  Corollary~\ref{cor:fixed-alpha} makes the leading logarithmic scale rigorous for all fixed \(\alpha\in(0,1)\), identifies the exact finite-\(n\) flux controlling the expectation, and gives the full exponential hitting law and variance asymptotic.  The displayed ratio asymptotic \(\Ebb T_{n,M}\sim n^\alpha e^{n^{1-\alpha}}\) is valid in the range \(\alpha>1/2\); for smaller \(\alpha\), the exact flux remains the canonical normalization and contains additional lower-order exponential terms familiar from coupon-collector lower tails.
\end{remark}

\section{Endpoint and threshold regimes}

The exact flux is useful beyond the fixed-$\alpha$ subcritical scale.

\subsection{The endpoint \texorpdfstring{$M=n$}{M=n}}

If $M=n$, completion means that the current window is a permutation of the $n$ coupon types.  Since $S(n-1,n-1)=1$,
\[
        \mu_{n,n}=\frac{(n-1)(n-1)!}{n^n}.
\]
Thus Theorem~\ref{thm:subcritical} gives
\[
        \frac{(n-1)(n-1)!}{n^n}T_{n,n}\Rightarrow\Exp(1).
\]
By Stirling's formula,
\[
        \Ebb T_{n,n}
        \sim \frac{n^n}{(n-1)(n-1)!}
        \sim \frac{e^n}{\sqrt{2\pi n}}.
\]
This is asymptotic to the reciprocal of the raw permutation-window probability $n!/n^n$; indeed
\[
        \frac{\mu_{n,n}^{-1}}{n^n/n!}
        =\frac{n!/n^n}{\mu_{n,n}}
        =\frac{n}{n-1}=1+O(n^{-1}).
\]
Thus, at the endpoint \(M=n\), permutation windows have only constant-order local clumping.

\subsection{Linear windows}\label{subsec:linear-windows}

The same exact flux gives a clean large-deviation answer when the expiration window is proportional to the number of coupon types.  Fix \(a>1\) and let
\[
        M=\lfloor an\rfloor .
\]
Then
\[
        \lambda_{n,M}=n\left(1-\frac1n\right)^M\sim ne^{-a},
\]
so Theorem~\ref{thm:subcritical} applies.  Hence
\[
        \mu_{n,M}T_{n,M}\Rightarrow\Exp(1),
        \qquad
        \Ebb T_{n,M}\sim \frac1{\mu_{n,M}}.
\]
To express the exponential rate, let \(\tau=\tau(a)>0\) be the unique solution of
\[
        \frac{\tau}{1-e^{-\tau}}=a,
\]
and set
\[
        I(a):=a\log\frac{\tau}{a}+a-\log(e^\tau-1).
\]
The saddle-point asymptotics for Stirling numbers of the second kind
\cite{TemmeStirling} give, with
\(\rho_{K,m}=K!S(m,K)/K^m\), that whenever \(m/K\to a\),
\[
        -\frac1K\log \rho_{K,m}\longrightarrow I(a).
\]
Since
\[
        \mu_{n,M}
        =\left(1-\frac1n\right)^M\rho_{n-1,M-1},
\]
and \((1-1/n)^M\to e^{-a}\), the prefactor is subexponential on the
\(n\)-scale and therefore does not affect the exponential rate.  It follows that
\[
        \frac1n\log \Ebb T_{n,\lfloor an\rfloor}\longrightarrow I(a).
\]
Thus, for every fixed \(a>1\), the expected completion time is exponential in \(n\), with explicit rate \(I(a)\).  The endpoint \(a\downarrow1\) is consistent with the permutation-window scale above, while \(I(a)\sim e^{-a}\) as \(a\to\infty\), matching the first-order behavior of the logarithmic-window regime.

\subsection{Critical windows}

Let
\[
        M=n\log n+cn+o(n),
        \qquad c\in\Rbb.
\]
Then the classical occupancy theorem gives
\[
        \pi_{n,M}\to e^{-e^{-c}}.
\]
The target is no longer rare when $c$ is fixed.  Nevertheless the exact flux has a limit on the $1/n$ scale:
\[
\begin{aligned}
        \mu_{n,M}
        &=\left(1-\frac1n\right)^M
          \frac{(n-1)!S(M-1,n-1)}{(n-1)^{M-1}} \\
        &\sim \frac{e^{-c}}{n}e^{-e^{-c}}.
\end{aligned}
\]
This is a scan-statistical regime rather than a rare-target regime: the window is onto for a positive fraction of times, but new entries into the onto set occur on the order-$n$ scale.  The first completion time from an empty start is then dominated by the ordinary coupon-collector approach to the threshold, not by a stationary rare-entry waiting time.

\subsection{Supercritical windows}

If
\[
        M=n\log n+a_n n,
        \qquad a_n\to+\infty,
\]
then a classical coupon collector completes before the first expiration with probability tending to one.  In that regime expiration is asymptotically invisible, and
\[
        \frac{T_{n,M}-n\log n}{n}
\]
has the usual Gumbel limiting behavior of the classical coupon collector.

\section{Summary of mechanisms}

The expiring model is a clean third example of the rare-entry perspective for nonmonotone coupon collectors:
\[
\begin{array}{ccl}
\text{clumsy collector} &:& \text{product stationary law and coordinate-refresh flux},\\[1mm]
\text{careless collector} &:& \text{lucky-climb high tail and thinning flux},\\[1mm]
\text{expiring collector} &:& \text{sliding-window surjection flux}.
\end{array}
\]
In the expiring model, the exact flux is
\[
        \mu_{n,M}=\frac{(n-1)(n-1)!S(M-1,n-1)}{n^M}.
\]
This identity is the main finite combinatorial input.  Once it is paired with finite-dependence declumping, the completion time has the universal rare-entry form
\[
        \mu_{n,M}T_{n,M}\Rightarrow\Exp(1).
\]
For $M=\floor{\alpha n\log n}$, $0<\alpha<1$, this yields the rigorous leading logarithmic scale
\[
        \log \Ebb T_{n,M}=n^{1-\alpha}+o(n^{1-\alpha}),
\]
and for $\alpha>1/2$ the sharper ratio asymptotic
\[
        \Ebb T_{n,M}\sim n^\alpha e^{n^{1-\alpha}}.
\]
For proportional windows $M=\lfloor an\rfloor$, $a>1$, the same flux gives
\[
        n^{-1}\log\Ebb T_{n,M}\to I(a),
\]
with the explicit saddle-point rate stated above.  Thus the original StackExchange heuristic is sharpened by an exact finite-$n$ flux, an exponential limit law, and moment asymptotics across several subcritical regimes.

\appendix

\section{Uniform local estimate for zero-truncated Poisson sums}
\label{app:ztp-local}

The following lemma is a self-contained local estimate for the
zero-truncated Poisson family.  The main text only needs the polynomial lower
bound in the final sentence of the lemma; the sharper local asymptotic is
included for completeness.  The proof is a standard Fourier-inversion argument
in the spirit of lattice local limit theorems; for background on the
classical fixed-law setting, see Petrov~\cite[Ch.~VII]{Petrov}.

\begin{lemma}[Uniform local bound for zero-truncated Poisson sums]
\label{lem:uniform-ztp-llt}
Let \(N\ge2\), \(m\ge N+1\), and let \(\tau=\tau_{N,m}>0\) be the unique
solution of
\[
        \frac{\tau}{1-e^{-\tau}}=\frac{m}{N}.
\]
Let \(D_1,\ldots,D_N\) be iid zero-truncated Poisson variables with parameter
\(\tau\):
\[
        \Pbb(D_i=k)=\frac{\tau^k}{k!(e^\tau-1)},\qquad k\ge1.
\]
Write
\[
        \mu_\tau=\Ebb D_i=\frac{\tau}{1-e^{-\tau}},
        \qquad
        \sigma_\tau^2=\Var(D_i),
        \qquad
        B=N\sigma_\tau^2.
\]
Assume \(0<\tau\le C_0\log N\). If \(B\to\infty\), then
\[
        \Pbb(D_1+\cdots+D_N=m)
        =
        \frac{1+o(1)}{\sqrt{2\pi B}},
\]
uniformly in this range. Consequently, for some constants \(c,C>0\),
depending only on \(C_0\),
\[
        \Pbb(D_1+\cdots+D_N=m)\ge c\,m^{-C}
\]
for all sufficiently large \(N\). If \(B\) is bounded, the same polynomial
lower bound still holds.
\end{lemma}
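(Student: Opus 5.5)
We prove the local limit statement by Fourier inversion on the lattice \(\mathbb Z\). The zero-truncated Poisson law has span one: it charges both \(1\) and \(2\). Write \(\varphi(\theta)=\Ebb e^{i\theta(D_1-\mu_\tau)}\). Because \(N\mu_\tau=m\) exactly, inversion gives
\[
        \Pbb(S_N=m)=\frac1{2\pi}\int_{-\pi}^{\pi}\varphi(\theta)^N\,d\theta .
\]
The characteristic function is explicit:
\[
        \Ebb e^{i\theta D_1}=\frac{e^{\tau e^{i\theta}}-1}{e^\tau-1}.
\]
We split the integral into a central range \(|\theta|\le\delta_N\) and a tail \(\delta_N<|\theta|\le\pi\). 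The cutoff is chosen as \(\delta_N=B^{-1/2}\omega_N/\sigma_\tau\cdot\sigma_\tau\), i.e.\ \(\theta\) of size \(\omega_N/\sqrt{B}\) in units of \(\sigma_\tau\sqrt N\), with \(\omega_N\to\infty\) slowly.

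\emph{Central range.} We expand \(\log\varphi(\theta)=-\sigma_\tau^2\theta^2/2+O(\kappa_3|\theta|^3)\). For Poisson-type laws the cumulants satisfy \(\kappa_3\lesssim\tau\asymp\sigma_\tau^2\) when \(\tau\ge1\), and \(\kappa_3\lesssim\sigma_\tau^2\) when \(\tau\) is small, since the centered law is then nearly Bernoulli on \(\{1,2\}\) with parameter \(\asymp\tau\). We need a uniform constant with \(|\kappa_3|\le C\sigma_\tau^2\), together with \(\sigma_\tau^2\asymp\min(\tau,1)\cdot\max(1,\tau)\asymp\tau\) in the stated range. Then on \(|\theta|\le\omega_N/\sqrt B\) the cubic error is \(O(B|\theta|^3)=O(\omega_N^3/\sqrt B)\). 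This is \(o(1)\) once \(\omega_N\) grows slowly relative to \(B^{1/6}\). Substituting \(\theta=s/\sqrt B\) then yields \((2\pi B)^{-1/2}(1+o(1))\), with the Gaussian tail beyond \(\omega_N\) negligible.

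\emph{Tail range.} This is the main obstacle, because the bound must be uniform over \(\tau\) ranging from \(O(1/N)\) up to \(C_0\log N\). We aim for
\[
        |\varphi(\theta)|\le\exp\{-c\,\sigma_\tau^2\theta^2\}\qquad(|\theta|\le\pi).
\]
Its \(N\)-th power then integrates over \(|\theta|>\omega_N/\sqrt B\) to \(O(B^{-1/2}e^{-c\omega_N^2})=o(B^{-1/2})\). For \(\tau\ge1\) we use the identity
\[
        |e^{\tau e^{i\theta}}|=e^{\tau\cos\theta}\le e^{\tau}e^{-c\tau\theta^2},
\]
controlling the subtracted \(1\) by \(e^{-\tau}\le1\), which is harmless when \(\tau\ge1\). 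For \(\tau\le1\) we instead compare with the two-point law on \(\{1,2\}\): the mass \(p_2\asymp\tau\) gives \(|\varphi|^2\le1-c\tau(1-\cos\theta)\), up to \(O(\tau^2)\) corrections that are absorbed. In both cases the bound has the required form because \(\sigma_\tau^2\asymp\tau\).

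\emph{Polynomial lower bound.} This follows directly from the asymptotic: \(B=N\sigma_\tau^2\le CN\tau\le C N\log N\le Cm\log m\), so \((2\pi B)^{-1/2}\ge m^{-1}\) for large \(N\). For bounded \(B\) we use the same direct argument as in Lemma~\ref{lem:poly-conditioning-cost}. There \(\tau=O(1/N)\) and \(k=m-N=O(1)\), and the configuration with exactly \(k\) entries equal to \(2\) and the rest equal to \(1\) has probability
\[
        \binom Nk p_2^kp_1^{N-k}\ge c\,(N\tau)^k e^{-CN\tau}/k!.
\]
Since \(N\tau\asymp k\ge1\) (as \(m\ge N+1\)), this is bounded below by a constant. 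Because \(B\) may be neither bounded nor tending to infinity along subsequences, we finish with a compactness step. For \(B\) in a fixed interval \([B_0,B_1]\), we rerun the Fourier argument with the central-range error kept only at order \(O(1)\). Equivalently, the explicit two-point-plus-correction configuration still gives mass \(\ge c(B_1)\).
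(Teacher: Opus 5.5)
Your overall architecture (lattice inversion with exact centering, a Gaussian central range, the $k$-twos configuration count for bounded $B$, and the subsequence closure between the two regimes) matches the paper's. Your $\tau\le1$ tail argument is sound; in fact nothing needs absorbing there, since $1-|\phi_\tau(\theta)|^2=\sum_{j,k}p_jp_k(1-\cos((j-k)\theta))\ge 2p_1p_2(1-\cos\theta)$ exactly.

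The genuine gap is the tail bound for $\tau\ge1$. You correctly identify this as the main obstacle, but your argument does not overcome it. Write $\phi_\tau(\theta)=(\chi_\tau(\theta)-e^{-\tau})/(1-e^{-\tau})$ with $|\chi_\tau(\theta)|=e^{-\tau(1-\cos\theta)}$. The triangle inequality then gives only
\[
|\phi_\tau(\theta)|\le\frac{e^{-\tau(1-\cos\theta)}+e^{-\tau}}{1-e^{-\tau}},
\]
and the right side exceeds $1$ precisely when $1-e^{-\tau(1-\cos\theta)}<2e^{-\tau}$. At $\tau=1$ this holds for all $|\theta|$ up to about $1.9$. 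For large $\tau$ it holds on the arc $\theta^2\lesssim e^{-\tau}/\tau$. Your tail starts at $|\theta|\approx\omega_N/\sqrt{N\tau}$, which lies inside that arc whenever $\tau\lesssim\log N-2\log\omega_N$. So the subtracted term is not harmless. The claimed bound $|\varphi(\theta)|\le e^{-c\sigma_\tau^2\theta^2}$ is left unproved on part of the tail for every $\tau$ between $1$ and roughly $\log N$.

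The bound itself is true, and the paper's proof supplies the missing pieces. For $\tau$ in a bounded range $[1,T]$ you can simply extend your adjacent-pair estimate, because $p_1p_2=\tau^3/(2(e^\tau-1)^2)\ge c_T\tau$ there; the paper uses compactness and span one instead. For $\tau\ge T$, the paper applies the triangle inequality only where $a=\tau(1-\cos\theta)\ge 8e^{-3\tau/4}$, getting $|\phi_\tau|\le e^{-a/2}$. On the complementary arc $|\theta|=O(e^{-3\tau/8}\tau^{-1/2})$ it uses $|\phi_\tau(\theta)|^2=1-\sigma_\tau^2\theta^2+O(\Ebb D^3|\theta|^3)$ with $\Ebb D^3=O(\tau^3)$. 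The arc $\delta_0\le|\theta|\le\pi$ is treated separately.

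A related, smaller point concerns your central range. A bound on $\kappa_3$, which is the third derivative of $\log\varphi$ at $0$ only, does not control the Taylor remainder. You need $|(\log\varphi)'''|\le C\sigma_\tau^2$ uniformly on a fixed arc $|\theta|\le\delta_0$. This is true: for large $\tau$ write $\log\phi_\tau(\theta)=\tau(e^{i\theta}-1)+\log(1-e^{-\tau e^{i\theta}})-\log(1-e^{-\tau})$, and for small $\tau$ use analyticity of $\log((e^z-1)/z)$ near $z=0$. But it must be proved, or replaced by the paper's cruder $\rho_3(\tau)\le C\sigma_\tau^2(1+\tau)^2$, which suffices because $\tau\le C_0\log N$. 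Once that uniform expansion on a fixed arc is established, it also disposes of the bad small-$\theta$ arc at large $\tau$. The triangle inequality is then needed only on $\delta_0\le|\theta|\le\pi$.
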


\begin{proof}
The function
\[
        \tau\mapsto \frac{\tau}{1-e^{-\tau}}
\]
is continuous and strictly increasing on \((0,\infty)\), has limit \(1\) as
\(\tau\downarrow0\), and tends to \(+\infty\) as \(\tau\to\infty\).  Hence,
for every \(m>N\), the parameter \(\tau=\tau_{N,m}\) is uniquely defined.

Let \(D=D_\tau\). Its characteristic function is
\[
        \phi_\tau(t)
        :=
        \Ebb e^{itD}
        =
        \frac{e^{\tau e^{it}}-1}{e^\tau-1}.
\]
Set
\[
        \psi_\tau(t):=e^{-i\mu_\tau t}\phi_\tau(t).
\]
Since \(m=N\mu_\tau\), Fourier inversion gives
\[
        \Pbb(D_1+\cdots+D_N=m)
        =
        \frac1{2\pi}\int_{-\pi}^{\pi}\psi_\tau(t)^N\,dt .
\]

We first record the elementary estimates used below. The first two moments are
\[
        \Ebb D=\frac{\tau}{1-e^{-\tau}},
        \qquad
        \Ebb D^2=\frac{\tau^2+\tau}{1-e^{-\tau}},
\]
and therefore
\[
        \sigma_\tau^2
        =
        \frac{\tau^2+\tau}{1-e^{-\tau}}
        -
        \left(\frac{\tau}{1-e^{-\tau}}\right)^2 .
\]
In particular,
\[
        \sigma_\tau^2\sim \frac{\tau}{2}\quad(\tau\downarrow0),
        \qquad
        \sigma_\tau^2\sim \tau\quad(\tau\to\infty).
\]
By these endpoint asymptotics and compactness on intermediate \(\tau\)-ranges,
\[
        \sigma_\tau^2\asymp \tau
\]
uniformly on \(0<\tau\le C_0\log N\).

Let
\[
        \rho_3(\tau):=\Ebb |D-\mu_\tau|^3 .
\]
We shall use the bound
\[
        \rho_3(\tau)\le C\sigma_\tau^2(1+\tau)^2 .
\]
For \(0<\tau\le1\), the distribution satisfies
\[
        \Pbb(D=1)\ge1-C\tau,
        \qquad
        \Pbb(D\ge2)=O(\tau),
        \qquad
        \Ebb[D^3\mathbf 1_{\{D\ge2\}}]=O(\tau),
\]
so \(\rho_3(\tau)=O(\tau)\), while \(\sigma_\tau^2\asymp\tau\). For
\(\tau\ge1\), the crude estimate
\[
        \rho_3(\tau)
        \le C\bigl(\Ebb D^3+\mu_\tau^3\bigr)
        \le C(1+\tau)^3
\]
is enough, since \(\sigma_\tau^2\asymp\tau\). This proves the displayed
third-moment bound.

Next we prove the characteristic-function damping estimates.  Choose
\(0<\tau_0<1<T<\infty\) and \(0<\delta_0<1\).  In the three subcases below
we may reduce \(\delta_0\) and enlarge \(T\) finitely many times; after this
is done, the final constants are fixed once and for all.  We shall prove
that there are constants \(c>0\) and \(\delta_0>0\), independent of
\(N,m,\tau\), such that, uniformly in \(\tau>0\),
\[
        |\phi_\tau(t)|\le \exp\{-c\tau t^2\},
        \qquad |t|\le\delta_0.
\]
Since \(\sigma_\tau^2\asymp\tau\) in the range used in the lemma, this is
equivalent there, up to changing \(c\), to
\[
        |\phi_\tau(t)|\le \exp\{-c\sigma_\tau^2t^2\},
        \qquad |t|\le\delta_0.
\]

For \(0<\tau\le\tau_0\), write \(p_j=\Pbb(D=j)\). Since
\[
        1-|\phi_\tau(t)|^2
        =
        \sum_{j,k\ge1}p_jp_k\bigl(1-\cos((j-k)t)\bigr),
\]
the adjacent pair \(j=1,k=2\), together with \(j=2,k=1\), gives
\[
        1-|\phi_\tau(t)|^2
        \ge 2p_1p_2(1-\cos t).
\]
For sufficiently small fixed \(\tau_0\),
\[
        p_1\ge c,
        \qquad
        p_2\ge c\tau,
\]
hence
\[
        1-|\phi_\tau(t)|^2\ge c\tau(1-\cos t)
        \ge c\tau t^2
\]
for \(|t|\le\delta_0\), after reducing \(\delta_0\). This gives
\[
        |\phi_\tau(t)|\le \exp\{-c\tau t^2\}.
\]

For \(\tau_0\le\tau\le T\), compactness gives the same estimate. Indeed,
\[
        \frac{1-|\phi_\tau(t)|^2}{t^2}
\]
extends continuously to \(t=0\), with limiting value \(\sigma_\tau^2>0\).
Because the zero-truncated Poisson law has span one, \(|\phi_\tau(t)|<1\)
for \(0<|t|\le\delta_0\). Hence, after reducing \(\delta_0\) if necessary,
\[
        1-|\phi_\tau(t)|^2\ge c t^2
\]
uniformly for \(\tau_0\le\tau\le T\), and since \(\tau\le T\), this implies
\[
        |\phi_\tau(t)|\le \exp\{-c\tau t^2\}.
\]

It remains to treat \(\tau\ge T\). Put \(q=e^{-\tau}\) and
\[
        \chi_\tau(t):=e^{\tau(e^{it}-1)}.
\]
Then
\[
        \phi_\tau(t)=\frac{\chi_\tau(t)-q}{1-q}.
\]
Let
\[
        a:=\tau(1-\cos t),
        \qquad
        |\chi_\tau(t)|=e^{-a}.
\]
Choose \(\delta_0\) small enough that \(1-\cos t\le 1/4\) for
\(|t|\le\delta_0\). Then
\[
        \frac{q}{e^{-a}}
        =
        e^{-\tau+a}
        \le e^{-3\tau/4}.
\]
If \(a\ge 8e^{-3\tau/4}\), then, for \(T\) sufficiently large,
\[
        |\phi_\tau(t)|
        \le
        \frac{e^{-a}+q}{1-q}
        \le
        e^{-a}(1+2e^{-3\tau/4})
        \le
        e^{-a/2}
        \le
        e^{-c\tau t^2}.
\]
If \(a<8e^{-3\tau/4}\), then
\(|t|=O(e^{-3\tau/8}\tau^{-1/2})\). The Taylor expansion at the origin gives
\[
        |\phi_\tau(t)|^2
        =
        1-\sigma_\tau^2t^2+O(\Ebb D^3\,|t|^3).
\]
Since \(\Ebb D^3=O(\tau^3)\) and \(\sigma_\tau^2\asymp\tau\) for
large \(\tau\), the error term is \(o(\sigma_\tau^2t^2)\), uniformly in
this subcase. Therefore, for \(T\) large enough,
\[
        |\phi_\tau(t)|^2
        \le
        1-\frac12\sigma_\tau^2t^2
        \le
        \exp\{-c\tau t^2\}.
\]
This completes the proof of the small-arc damping estimate.

We also need an off-zero estimate. For every fixed \(\delta\in(0,\pi]\),
there is \(c_\delta>0\) such that
\[
        \sup_{\delta\le |t|\le\pi}|\phi_\tau(t)|^N
        \le
        \exp\{-c_\delta N\min(\tau,1)\}.
\]
For \(0<\tau\le\tau_0\), the adjacent-pair bound above gives
\[
        1-|\phi_\tau(t)|^2\ge c_\delta\tau,
        \qquad \delta\le |t|\le\pi,
\]
and hence
\[
        |\phi_\tau(t)|^N\le \exp\{-c_\delta N\tau\}.
\]
For \(\tau_0\le\tau\le T\), compactness and span one give
\[
        \sup_{\tau_0\le\tau\le T,\ \delta\le |t|\le\pi}|\phi_\tau(t)|
        \le 1-\eta_\delta
\]
for some \(\eta_\delta>0\), hence
\[
        |\phi_\tau(t)|^N\le e^{-c_\delta N}.
\]
For \(\tau\ge T\), using again
\[
        \phi_\tau(t)=\frac{\chi_\tau(t)-e^{-\tau}}{1-e^{-\tau}},
        \qquad
        |\chi_\tau(t)|=e^{-\tau(1-\cos t)},
\]
and the fact that \(1-\cos t\ge c_\delta\) on
\(\delta\le |t|\le\pi\), we obtain
\[
        |\phi_\tau(t)|\le C e^{-c_\delta\tau}.
\]
Increase \(T\) further, if necessary, so that \(\log C\le c_\delta T/2\).
Then, for \(\tau\ge T\),
\[
        |\phi_\tau(t)|^N
        \le
        \exp\{N\log C-c_\delta N\tau\}
        \le
        \exp\{-c_\delta N\tau/2\}.
\]
Renaming \(c_\delta/2\) as \(c_\delta\), this is the required bound.
The off-zero estimate follows in all regimes.

We now evaluate the Fourier integral. Put \(t=s/\sqrt B\). For fixed \(s\),
the centered characteristic function has the expansion
\[
        \psi_\tau(s/\sqrt B)
        =
        1-\frac{\sigma_\tau^2s^2}{2B}
        +
        O\left(\rho_3(\tau)\frac{|s|^3}{B^{3/2}}\right).
\]
Consequently
\[
        \psi_\tau(s/\sqrt B)^N
        =
        \exp\{-s^2/2+o(1)\},
\]
because
\[
        \frac{N\rho_3(\tau)}{B^{3/2}}
        \le
        \frac{C N\sigma_\tau^2(1+\tau)^2}{(N\sigma_\tau^2)^{3/2}}
        =
        \frac{C(1+\tau)^2}{\sqrt B}.
\]
If \(0<\tau\le1\), then \(B\asymp N\tau\to\infty\), so the last expression
is \(O((N\tau)^{-1/2})=o(1)\). If \(1\le\tau\le C_0\log N\), then
\(B\asymp N\tau\), and hence
\[
        \frac{(1+\tau)^2}{\sqrt B}
        \le
        C\frac{\tau^{3/2}}{\sqrt N}
        \le
        C\frac{(\log N)^{3/2}}{\sqrt N}
        \to0.
\]

The small-arc damping estimate gives, for \(|s|\le\delta_0\sqrt B\),
\[
        |\psi_\tau(s/\sqrt B)|^N
        =
        |\phi_\tau(s/\sqrt B)|^N
        \le
        \exp\{-cs^2\}.
\]
Therefore dominated convergence yields
\[
\begin{aligned}
        \int_{-\delta_0}^{\delta_0}\psi_\tau(t)^N\,dt
        &=
        \frac1{\sqrt B}
        \int_{-\delta_0\sqrt B}^{\delta_0\sqrt B}
        \psi_\tau(s/\sqrt B)^N\,ds  \\
        &=
        \frac1{\sqrt B}
        \int_{-\infty}^{\infty}e^{-s^2/2}\,ds\,(1+o(1)) \\
        &=
        \sqrt{\frac{2\pi}{B}}\,(1+o(1)).
\end{aligned}
\]

The complementary arc is negligible. By the off-zero estimate,
\[
        \int_{\delta_0\le |t|\le\pi}|\psi_\tau(t)|^N\,dt
        \le
        2\pi\exp\{-cN\min(\tau,1)\}.
\]
If \(0<\tau\le1\), then \(B\asymp N\tau\to\infty\), and
\[
        \exp\{-cN\tau\}=o(B^{-1/2}).
\]
If \(1\le\tau\le C_0\log N\), then \(B\asymp N\tau\le C N\log N\), so
\[
        \exp\{-cN\}=o(B^{-1/2}).
\]
Hence
\[
        \int_{-\pi}^{\pi}\psi_\tau(t)^N\,dt
        =
        \sqrt{\frac{2\pi}{B}}\,(1+o(1)).
\]
Fourier inversion gives
\[
        \Pbb(D_1+\cdots+D_N=m)
        =
        \frac{1+o(1)}{\sqrt{2\pi B}}.
\]

This proves the local asymptotic when \(B\to\infty\). Since
\[
        B=N\sigma_\tau^2\le C N\tau\le C N\log N\le C m\log m,
\]
we also get, for all sufficiently large \(N\),
\[
        \Pbb(D_1+\cdots+D_N=m)\ge cB^{-1/2}\ge c\,m^{-C}.
\]

Finally consider the remaining bounded-\(B\) regime. Since \(\sigma_\tau^2\asymp\tau\), this
forces \(\tau=O(1/N)\). Therefore
\[
        k:=m-N
        =
        N\left(\frac{\tau}{1-e^{-\tau}}-1\right)
        =
        N\left(\frac{\tau}{2}+O(\tau^2)\right)
        =
        O(1).
\]
The event that exactly \(k\) of the variables equal \(2\), and all remaining
variables equal \(1\), implies
\[
        D_1+\cdots+D_N=N+k=m.
\]
Its probability is
\[
        \binom Nk p_2^k p_1^{N-k},
        \qquad
        p_j:=\Pbb(D=j).
\]
For \(\tau=O(1/N)\),
\[
        p_1=1-\frac{\tau}{2}+O(\tau^2),
        \qquad
        p_2=\frac{\tau}{2}+O(\tau^2).
\]
Since \(k=O(1)\), the displayed probability is bounded below by a negative
power of \(m\). Hence
\[
        \Pbb(D_1+\cdots+D_N=m)\ge c\,m^{-C}
\]
also in the bounded-\(B\) case.
\end{proof}

\end{document}